\newtheorem{theorem}{Theorem}[section]
\newtheorem{conjecture}{Conjecture}
\newtheorem{corollary}{Corollary}[section]
\newtheorem{lemma}{Lemma}[section]
\newtheorem{proposition}{Proposition}[section]
\newtheorem{definition}{Definition}
\newcommand{\Prb}{\mathbb{P}}
\newcommand{\Bin}{\textit{Bin}}
\newcommand{\Hyp}{\textit{Hyp}}
\newcommand{\mG}{\mathbb{G}}
\newcommand{\Exp}{\mathbb{E}}
\newcommand{\mH}{\mathcal{H}}
\newcommand{\mbH}{\mathbb{H}}
\newcommand{\mB}{\mathcal{B}}
\newcommand{\mD}{\mathcal{D}}
\newcommand{\mP}{\mathcal{P}}
\newcommand{\mF}{\mathcal{F}}
\title{A binomial random multigraph}
\author{
Christos Pelekis\thanks{Aristotle University of Thessaloniki, 
Department of Mathematics, 
541 24 Thessaloniki, Greece, e-mail: pelekis.chr@gmail.com, \, cpelekis@math.auth.gr }
}
\begin{document}

\maketitle

\begin{abstract}  
Fix a positive integer $n$, a real number $p\in (0,1]$, and a (perhaps random)
hypergraph $\mH$ on $[n]$. 
We introduce and investigate the following  
random multigraph model, which we denote  $\mG(n,p\, ; \,\mH)$: 
begin with an empty graph on $n$ vertices, which are labelled by the set $[n]$. 
For every $H\in \mH$ choose, independently from previous choices, a doubleton from $H$, say $D = \{i,j\} \subset H$, uniformly at random and then introduce an edge
between  the vertices $i$ and $j$ in the graph 
with probability $p$, where each edge is introduced independently of all other edges.   
\end{abstract}

\noindent{
\emph{Keywords}: random multigraphs; hypergraphs; threshold functions
}

\noindent{
\emph{MSC (2020)}:  05C80; 05C65; 60C05; 05C82
}

\section{Introduction}

\subsection{Notation and terminology}

Let us begin with introducing some notation and terminology, which will remain fixed throughout the text. 

Given a positive integer $n$, we denote by 
$[n]$ the set $\{1,\ldots,n\}$. The cardinality of a finite set $F$ is denoted  $|F|$. 
A \emph{hypergraph}, $\mH$, on $[n]$ is a collection of subsets of $[n]$. Let us emphasise that 
we do not assume that each subset of $[n]$ appears at most once in $\mH$, and we allow repetitions.   More precisely, $\mH$ is a \emph{multi-hypergraph} but we slightly abuse terminology and use the term hypergraph instead.  The elements of a hypergraph are referred to as \emph{hyperedges}, or just \emph{edges}. 

Given a hypergraph $\mH$ on $[n]$ and a subset $H\subset [n]$, we denote by $m(H;\mH)$ the \emph{multiplicity} of $H$ in the hypergraph $\mH$, i.e., $m(H;\mH)$ is the number of times the set $H$ appears in the collection $\mH$. 
The \emph{degree} of a set $A\subset [n]$ in $\mH$, denoted $\deg_{\mH}(A)$, is the cardinality of the hypergraph $\{H\in \mH : A\subset H\}$. 
When $A=\{v\}$ is a singleton, we write $\deg_{\mH}(v)$ instead of $\deg_{\mH}(\{v\})$. 

A hypergraph $\mH$ is \emph{simple}
when $m(H;\mH) =1$, for all $H\in\mH$. 
Given a finite set $F\subset [n]$ and an integer $k\in \{0,1,\ldots, | F| \}$, we denote by $\binom{F}{k}$ the simple hypergraph consisting of all subsets of $F$ whose cardinality equals $k$.
A hypergraph $\mH$ is said to be $k$-\emph{uniform} if $|H|=k$, for all $H\in\mH$. 
A $2$-uniform hypergraph is just a (multi)graph. Unless mentioned otherwise, all hypergraphs $\mH$ considered in the text satisfy $|H|\ge 2$, for all $H\in\mH$. Finally, 
given a simple hypergraph, say $\mH$, and a positive integer $m$, we denote by $\mH^{(\times m)}$ the hypergraph that contains each hyperedge of $\mH$ exactly $m$ times.

Given two sequences $\{a_n\}_n, \{b_n\}_n$, we write $a_n\ll b_n$, or $b_n\gg a_n$, when $\frac{a_n}{b_n}\to 0$, as $n\to + \infty$. We also write $a_n = O(b_n)$ if there exists a constant $C>0$, which does not depend on $n$, and a positive integer $n_0$ such that $a_n \le C\cdot b_n$, for all $n\ge n_0$. Similarly, we write $a_n = \Theta(b_n)$ if there exist constants $C,c>0$, independent of $n$, and positive integer $n_0$ such that 
$cb_n \le a_n \le Cb_n$, for all $n\ge n_0$. In other words, $a_n$ and $b_n$ are of the same order. Finally, we write $a_n = \Omega(b_n)$ if  there exists constant $C>0$ such that $a_n \ge C b_n$ for all $n$ large enough. 

Given a positive integer $m$ and a real number $q\in [0,1]$, we denote by 
$\Bin(m,q)$ a \emph{binomial random variable} of parameters $m$ and $q$. 
Furthermore, given real numbers $\{p_s\}_{s\in S}\in [0,1]$, indexed by some finite set $S$,  
we denote by $\mB(p_s;s\in S)$ the random variable that counts the number of successes in $|S|$ independent trials, where trial $s\in S$ has probability $p_s$ of being a success. 
The random variable $\mB(p_s;s\in S)$ is referred to in the literature as a \emph{Poisson-Binomial distribution}. 
Let also $\Hyp(N,M,a)$ denote a \emph{hypergeometric random variable} counting the number of successes when a sample of size $a$ is drawn without replacement from a population of size $N$ which contains $M$ successes.
Given two random variables $X,Y$, we write $X\sim Y$ when $X$ and $Y$ have the same distribution.

In this article we shall be concerned with a model to generate random multigraphs. 
A \emph{multigraph} is a graph which is allowed to have multiple edges between pairs of vertices, and we shall be concerned with a random process which generates such graphs. 
Random processes which generate (simple) graphs are referred to as \emph{random graphs}, and constitute a classical topic of probabilistic combinatorics.  The study of random graphs is 
more than sixty years old and was initiated  by Gilbert~\cite{Gilbert} and Erd\H{o}s \& R\'enyi~\cite{ER1, ER2}, 
who introduced two quintessential random graph models. 
The first, and perhaps most studied, model is the \emph{binomial random graph},  denoted $\mG(n,p)$, which is obtained by independently introducing an edge between each pair of vertices in an empty graph on $n$ vertices with probability $p$.
The second model is  the \emph{uniform random graph},  denoted $\mG(n,m)$, which is obtained by choosing a graph uniformly at random from the set consisting of all graphs on the vertex set $[n]$ having $m$ edges. In other words, $\mG(n,m)$ is a subset of $\binom{\binom{[n]}{2}}{m}$ chosen uniformly at random. 
Since their introduction, the two models have been studied extensively and have been generalized in a plethora of ways, thus generating a  substantial amount of literature. 
We refer the reader to~\cite{Bollobas, Durrett, Frieze_Karonski, JLR, Palmer, Hofstad} for excellent textbooks on the theory of random graphs.    

We shall also be concerned with two  extension of the above-mentioned random graph models to hypergraphs.  
The \emph{binomial random $k$-uniform hypergraph}, denoted $\mathbb{H}_{n,q;k}$, is a $k$-uniform hypergraph $\mH$ whose hyperedges are obtained 
by independently including each element  $F\in\binom{[n]}{k}$ in $\mH$ with probability $q$. 
The \emph{uniform random $k$-uniform hypergraph}, denoted $\mathbb{H}_{n,m;k}$, is a $k$-uniform 
hypergraph $\mH$ whose hyperedges are the sets obtained by choosing an element from $\binom{\binom{[n]}{k}}{m}$ uniformly at random  (see~\cite[Chapter~14]{Frieze_Karonski}).

Random multigraph  models have also  attracted considerable attention due to the fact that they are more appropriate for ``real world" problems. 
For instance, the need for considering models which allow  multiple edges between pairs of vertices  arises in the study of social networks, where interactions  between individuals may become apparent in different contexts, or time periods (see, for example,~\cite{Shafie}). 
There is a significant amount of literature on random multigraphs, and   
several models have been proposed in various contexts. Examples  include  a \emph{configuration model} (see~\cite[Chapter~7]{Hofstad}), as well as several generalisations and variations of the binomial random graph and the uniform random graph (see, for example,~\cite{Anastos, Chen_Zhang_Li, Federico, Godehardt_Annals, Goldschmidt_Kreacic, Ranola}, among many others). 
In the present article we introduce and investigate the following random multigraph model.

\subsection{The model}\label{sec:model}

A \emph{binomial random multigraph}, denoted $\mG(n,p\,;\,\mH)$, is generated as follows: 
Fix a positive integer $n$, a real number $p\in (0,1]$, and a (perhaps random)
hypergraph $\mH$ on $[n]$. 
Begin with an empty graph on $n$ vertices, which are labelled by the set $[n]$. 
For every set $H\in \mH$, choose, independently from previous choices, a doubleton from $H$, say $D = \{i,j\} \in \binom{H}{2}$, uniformly at random and then introduce an edge between the vertices $i$ and $j$ in the graph with probability $p$, where each edge is introduced independently of all other edges.

\subsection{Relation to existing models}

When $\mH= \binom{[n]}{2}$, then $\mG(n,p\,;\,\binom{[n]}{2}) = \mG(n,p)$, and therefore the binomial random graph is a special case of our model. 
Observe that, when $\mH$ contains hyperedges such that $|H|\ge 3$, the multigraph 
$\mG(n,1\,;\, \mH)$ is still random, in contrast to $\mG(n,1)$ which is deterministic. 

A straightforward way to generalize $\mG(n,p\,;\,\mH)$ is to   sample $m$ times (with or without replacement) a doubleton from each element of $\mH$, and then introduce an edge between the corresponding pairs of vertices in the sample with probability $p$. 
Special cases of the model, when the sampling is \emph{with replacement}, have been reported in the literature. For example, 
the model $\mG\left(n,p\,;\, \binom{[n]}{2}^{(\times m)}\right)$ 
has been investigated in~\cite{Godehardt_Annals, Harris_Godehardt_Horsch}, in the setting of hypotheses testing for clustering and classification, and may be seen as a natural generalisation of the binomial random graph to multigraphs having at most $m$ edges between each pair of vertices.  When the sampling is \emph{without replacement} then, for every $H\in \mH$, we sample $m\le \binom{|H|}{2}$ distinct doubletons from $H$ uniformly at random, and then we flip a coin for every doubleton in the sample to decide whether we join the corresponding pairs of vertices in the graph. This model appears to be new. 
Observe that the $m$ distinct doubletons that are chosen uniformly at random from $\binom{H}{2}$ correspond to a random sub-graph of the complete graph on the vertex set $H$, and the corresponding model, with $p=1$, may be seen as a ``randomized version" of the binomial random motif graph, which is introduced   in~\cite{Anastos}.

One could also sample a different number of doubletons from each element of $\mH$, and even randomize this number. A bit more precisely, let $\mH$ be a hypergraph, and suppose that 
$\{X_H\}_{H\in\mH}$ are mutually independent random variables, indexed by the edges of $\mH$, taking values in the set of non-negative integers $\{0,1,2,\ldots\}$.
Let $\mbH{\{X_H ; H\in\mH\}}$ denote the (random) hypergraph that contains each  element $H\in\mH$  exactly $X_H$ times. That is, we draw from $X_H$, for each $H\in \mH$, and then create a hypergraph 
that contains each edge $H\in\mH$ exactly $X_H$ times. 
The model $\mG\left(n,1\,;\, \mbH{\{X_B; B\in \binom{[n]}{2}\}}\right)$  is introduced  in~\cite{Chen_Zhang_Li}, as a mutligraph analogue of the binomial random graph model.
The model $\mG\left(n,1\, ; \, \mbH{\{P_B\, ; \, B\in \binom{[n]}{2}\}}\right)$, where $\{P_B\}_{B\in \binom{[n]}{2})}$ are independent Poisson random variables of given mean is introduced in~\cite{Ranola}, as a model to study biological networks.  All the aforementioned models  can be considered in a more general setting, where $\binom{[n]}{2}$ is replaced by some other  hypergraph on $[n]$. Such models correspond to a model $\mG(n,p\, \, ; \, \,\mH)$ for which the hypergraph $\mH$ is random, but one could consider the model with more ``exotic" random hypergraphs.
For example,   one could consider the model 
$\mG(n,p\, ; \, \mathbb{H}_{n,k,q})$, which appears to be new.
The model $\mG(n,p\, ; \, \mbH_{n,m;k})$ reduces to $\mG(n,m)$, for $p=1,k=2$. 

In this article we shall mainly be concerned with the model $\mG(n,p\, ; \,\binom{[n]}{k})$, but we hope that we will be able to report on various  other models in the future.

\subsection{Main results}

Given  a multigraph property, say $\mP$, we write $G\in\mP$ when the multigraph $G$ posses property $\mP$. 
A ``typical" question in the theory of the binomial random graph model $\mG(n,p)$ begins with a particular graph property, say $\mP$, and asks for values $\{p_n\}_n$ for which $\Prb(\mG(n,p_n)\in \mP)$ tends to one, as n tends to infinity. 
Such questions involve the notion of \emph{thresholds} and 
one remarkable  discovery in the theory of random graphs, which is due to Erd\H{o}s and R\'enyi, is the fact that several graph properties admit a threshold. That is, for several graph properties $\mP$, there exists a sequence $\{p_n^{\ast}\}_n$ such that the probability that a random graph $G_n\in \mG(n,p_n)$ possesses $\mP$ tends to zero, when $p_n\ll p_n^{\ast}$, and tends to one when $p_n\gg p_n^{\ast}$. In other words, there is a ``phase transition" in this limiting probability which transits from the property being very unlikely to the property becoming almost certain. In this article we shall be concerned with 
thresholds in binomial random multigraphs. 
Before being more precise, let us first recall some definitions. 

A (multi)graph property $\mP$ is called \emph{non-trivial} if the empty graph does not possess $\mP$, while the complete graph does possess $\mP$. 
A (multi)graph property $\mathcal{P}$ is called \emph{monotone increasing} 
if $G\in \mathcal{P}$ implies that $G+e \in\mathcal{P}$, i.e., the property $\mathcal{P}$ 
is not destroyed after adding an edge $e$ to the graph $G$. Similarly, a (multi)graph property $\mathcal{P}$ is called \emph{monotone decreasing}  if $G\in \mathcal{P}$ implies that $G- e \in\mathcal{P}$, i.e., the property $\mathcal{P}$ 
is not destroyed by removing an edge $e$ from the graph $G$.
Clearly, a property $\mP$ is monotone increasing if and only if its complement is monotone decreasing, and thus any statement about an increasing property translates to a statement about a  decreasing one. 
Examples of monotone increasing properties are connectivity, Hamiltonicity, triangle-containment, among others. An example of a monotone decreasing multigraph property is the graph being simple. 
For deterministic hypergraphs, the corresponding notion of a threshold is analogous to the notion 
of a threshold in $\mG(n,p)$.

\begin{definition}[Threshold functions]
Let  $\{\mH_n\}_{n\ge 2}$ be a sequence of hypergraphs on $[n]$, and fix  a monotone increasing (multi)graph property $\mP$.  
A \emph{threshold function} for $\mP$, in the model $\mG(n,p\, ; \,\mH_n)$, is a sequence $\{c_n^{\ast}\}_n$ such that 

\[
\lim_{n\to\infty} \Prb(\mG(n,p_n\, ; \,\mH_n) \in\mP) = \begin{cases}
0,&\text{ if } \, \, p_n \ll c_n^{\ast}    \\
1,&\text{ if }\,\,  p_n \gg c_n^{\ast}  .
\end{cases}
\]
\end{definition}

However, when the hypergraph is random, the corresponding notion of a threshold 
has to take into account the underlying ``randomness of the hypergraph". 
For example,  thresholds in $\mG(n,p\, ; \,\mathbb{H}_{n,q;k})$ are defined as follows.

\begin{definition}[Threshold functions in $\mG(n,p\, ; \,\mathbb{H}_{n,q;k})$]
Let  $k\ge 3$ be fixed, and let $\mP$ be  a monotone increasing (multi)graph property.  
A \emph{threshold function} for $\mP$, in the model $\mG(n,p\, ; \,\mathbb{H}_{n,q;k})$, is a sequence $\{c_n^{\ast}\}_n$ such that 

\[
\lim_{n\to\infty} \Prb(\mG(n,p_n\, ; \,\mathbb{H}_{n,q_n;k}) \in\mP) = \begin{cases}
0,&\text{ if } \, \, p_n\cdot q_n \ll c_n^{\ast}    \\
1,&\text{ if }\,\,  p_n\cdot q_n \gg c_n^{\ast}  .
\end{cases}
\]
\end{definition}

A central line of research in the theory of  random graphs is concerned with  the 
investigation of thresholds for various graph properties. A well-known result,  
due to Bollob\'as and Thomason
(see~\cite[Theorem~1.7]{Frieze_Karonski}), 
asserts that every monotone increasing graph property admits a threshold in $\mG(n,p)$. We believe that the Bollob\'as--Thomason theorem holds true for several ``well-behaved" binomial random multigraphs.   For example, we believe that the following holds true.

\begin{conjecture}\label{Bollobas_thomason}
Let $k\ge 3$ be fixed, and let $\mP$ be a non-trivial monotone increasing (multi)graph property. Then $\mP$ has a threshold in  $\mG(n,p\, ; \,\binom{[n]}{k})$. 
\end{conjecture}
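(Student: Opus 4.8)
The plan is to mimic the classical Bollob\'as--Thomason argument, which rests on the observation that the map $p\mapsto \Prb(\mG(n,p\,;\,\binom{[n]}{k})\in\mP)$ is monotone non-decreasing in $p$ whenever $\mP$ is monotone increasing, together with a ``robustness under disjoint repetition'' step. First I would verify the monotonicity claim: using the standard coupling in which one generates, for each $H\in\binom{[n]}{k}$, a uniform doubleton $D_H\in\binom{H}{2}$ and an independent uniform random variable $U_H\in[0,1]$, and then includes the edge corresponding to $D_H$ iff $U_H\le p$, one sees immediately that increasing $p$ only adds edges, hence cannot destroy a monotone increasing property. This gives that $f_n(p):=\Prb(\mG(n,p\,;\,\binom{[n]}{k})\in\mP)$ is non-decreasing and continuous in $p$ on $(0,1]$, with $f_n(1)\ge$ the probability that $\mG(n,1\,;\,\binom{[n]}{k})$ is complete (which is positive since $\mP$ is non-trivial — one checks that the complete graph is obtained with positive probability, as each edge $\{i,j\}$ is realised by some $H\supset\{i,j\}$).

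Next I would set $c_n^\ast$ to be any value of $p$ at which $f_n$ crosses $1/2$, i.e.\ $c_n^\ast:=\inf\{p\in(0,1]: f_n(p)\ge 1/2\}$ (with the convention $c_n^\ast=1$ if $f_n$ never reaches $1/2$). The content is then to show that this is a genuine threshold, i.e.\ $f_n(p_n)\to 0$ when $p_n\ll c_n^\ast$ and $f_n(p_n)\to 1$ when $p_n\gg c_n^\ast$. The key lemma, in the spirit of Bollob\'as--Thomason, is a \emph{boosting} statement: if $t$ is a positive integer and $p\le 1/t$, then
\[
1-f_n(tp)\le \bigl(1-f_n(p)\bigr)^{t},
\]
equivalently $\Prb(\mG(n,tp\,;\,\binom{[n]}{k})\notin\mP)\le \Prb(\mG(n,p\,;\,\binom{[n]}{k})\notin\mP)^{t}$. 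To prove this I would couple $\mG(n,tp\,;\,\binom{[n]}{k})$ with the union of $t$ independent copies $G_1,\dots,G_t$ of $\mG(n,p\,;\,\binom{[n]}{k})$: for a fixed edge $H$ of $\binom{[n]}{k}$, in each copy $G_r$ we pick a doubleton $D_H^{(r)}$ and retain it with probability $p$, and one checks that the doubleton-or-nothing produced by $G_1\cup\cdots\cup G_t$ on the ``slot'' $H$ stochastically contains the doubleton-or-nothing produced by one copy at level $tp$ (using $(1-p)^t\le 1-tp$ is the wrong direction, so instead one argues slot-by-slot that the event ``$H$ contributes some edge'' has probability $1-(1-p)^t\ge 1-(1-p)\cdot 1 = p$ versus $tp$, which does \emph{not} immediately work — see the obstacle below). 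Granting the boosting lemma, the threshold conclusion is routine: if $p_n=\omega_n\cdot c_n^\ast$ with $\omega_n\to\infty$, take $t_n=\lceil\omega_n\rceil$ and conclude $1-f_n(p_n)\le 1-f_n(t_nc_n^\ast)\le (1-f_n(c_n^\ast))^{t_n}\le (1/2)^{t_n}\to 0$; the sub-threshold case follows by the symmetric argument applied to the complementary decreasing property, or by reversing the inequality.

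The main obstacle I anticipate is exactly the boosting/coupling step. In the classical $\mG(n,p)$ setting each potential edge is an independent Bernoulli and the inequality $1-(1-p)^t\ge \min(1,tp)$ makes the union of $t$ copies dominate the single copy at the boosted density. Here there is an extra layer of randomness — the choice of doubleton $D_H$ within each hyperedge $H$ — and the $t$ independent copies choose $t$ independent doubletons from $H$, whereas the single boosted copy chooses only one. So the union $G_1\cup\cdots\cup G_t$ is \emph{not} distributed as $\mG(n,tp\,;\,\binom{[n]}{k})$, nor does it obviously stochastically dominate it, because a monotone increasing property could in principle be ``helped'' by concentrating probability on one doubleton rather than spreading it. The fix I would pursue is to couple differently: rather than taking independent doubletons, generate one doubleton $D_H$ per hyperedge \emph{shared} across all $t$ copies, and let copy $r$ retain it with an independent probability-$p$ coin; then the union retains $D_H$ with probability $1-(1-p)^t\ge tp$ (valid since $p\le 1/t$ forces $tp\le 1$, and $1-(1-p)^t\ge 1-(1-p) = p$ is too weak, so one instead uses convexity: $1-(1-p)^t\ge tp$ whenever $tp\le 1$, which is the standard Weierstrass-type inequality). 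With shared doubletons this coupling is valid and yields $\mG(n,tp\,;\,\binom{[n]}{k})\preceq G_1\cup\cdots\cup G_t$ as subgraphs of a common probability space, and the latter union restricted to the event ``at least one copy lies in $\mP$'' lies in $\mP$ by monotonicity — giving the boosting inequality. Verifying that ``shared doubleton, independent coins'' indeed produces a genuine copy of $\mG(n,p\,;\,\binom{[n]}{k})$ in each coordinate $G_r$ (it does, marginally) and that the union dominates the boosted model is the crux, and is where I would spend the most care.
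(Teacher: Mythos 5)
The paper presents this statement as Conjecture~\ref{Bollobas_thomason} and explicitly writes ``We are unable to prove Conjecture~\ref{Bollobas_thomason}''; there is no proof in the paper to compare against, and a correct argument would settle an open problem. Unfortunately the boosting step, which you rightly identify as the crux, does not go through.

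A preliminary sign error: $1-(1-p)^t\ge tp$ for $tp\le 1$ is false. Bernoulli's inequality gives $(1-p)^t\ge 1-tp$, so $1-(1-p)^t\le tp$ always (e.g.\ $t=2$, $p=0.1$ gives $1-0.81=0.19<0.2$). With the correct sign, the shared-doubleton union equals $\mG\bigl(n,1-(1-p)^t\,;\,\binom{[n]}{k}\bigr)$ in law and is therefore dominated \emph{by} $\mG(n,tp\,;\,\binom{[n]}{k})$ via Theorem~\ref{monot:1}; happily, that is the direction the boosting chain $1-f_n(tp)\le\Prb(G_1\cup\dots\cup G_t\notin\mP)\le(1-f_n(p))^t$ needs, so the sign slip is not by itself fatal. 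The fatal gap is the second inequality: it requires $G_1,\dots,G_t$ to be independent, and with a shared doubleton assignment $\mD$ they are not. They are independent only \emph{conditionally} on $\mD$, so averaging yields only $\Prb(G_1\cup\dots\cup G_t\notin\mP)\le\Exp\bigl[(1-f_{\mD}(p))^t\bigr]$, where $f_{\mD}(p)=\Prb(\mG(n,p\,;\,\mD)\in\mP)$; and Jensen's inequality for the convex map $x\mapsto x^t$ gives $\Exp\bigl[(1-f_{\mD}(p))^t\bigr]\ge\bigl(\Exp[1-f_{\mD}(p)]\bigr)^t=(1-f_n(p))^t$, which is the \emph{wrong} direction. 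The positive dependence introduced by sharing $\mD$ across copies is exactly what defeats the product bound. The alternative of independent per-copy doubletons restores independence but loses the domination: the union $G_1\cup\dots\cup G_t$ can then carry several edges inside a single hyperedge, so its total edge count can exceed $\binom{n}{k}$, an increasing event that is impossible for $\mG(n,tp\,;\,\binom{[n]}{k})$ — no coupling can make the union a subgraph of the boosted model. Each of the two couplings furnishes exactly one of the two ingredients the boosting lemma needs, and this tension, which you correctly anticipate, is precisely why the paper leaves the statement as a conjecture and settles for the weaker gapped threshold of Theorem~\ref{gap_threshold}.
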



We are unable to prove Conjecture~\ref{Bollobas_thomason}, though we  will provide some evidence for its validity. Our first result implies that several increasing properties admit a ``gapped threshold" in $\mG(n,p\, ; \,\binom{[n]}{k})$.

\begin{theorem}\label{gap_threshold}
Let $k\ge 3$ be fixed, and let $\mP$ be a non-trivial monotone increasing (multi)graph property which admits a threshold in $\mG(n,p)$. 
Then there exist sequences $\{b_n^{\ast}\}_n$ and $\{c_n^{\ast}\}_n$ such that 
\[
\lim_{n\to\infty} \Prb\left(\mG(n,p_n\, ; \,\binom{[n]}{k}) \in\mP\right) = \begin{cases}
0,&\text{ if } \, \, p_n \ll b_n^{\ast}    \\
1,&\text{ if }\,\,  p_n \gg c_n^{\ast}  .
\end{cases}
\]
\end{theorem}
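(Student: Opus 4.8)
The plan is to couple $\mG(n,p\,;\,\binom{[n]}{k})$ with two ordinary binomial random graphs, $\mG(n,p')$ from below and $\mG(n,p'')$ from above, for suitable $p',p''$ depending on $p$ and $n$, and then to push the threshold of $\mP$ in $\mG(n,p)$ through these couplings. The key structural observation is that a fixed pair $\{i,j\}\in\binom{[n]}{2}$ receives an edge in $\mG(n,p\,;\,\binom{[n]}{k})$ precisely when, among the $\deg_{\binom{[n]}{k}}(\{i,j\})=\binom{n-2}{k-2}$ hyperedges containing both $i$ and $j$, at least one of them (i) selects the doubleton $\{i,j\}$ out of its $\binom{k}{2}$ possible doubletons — an event of probability $1/\binom{k}{2}$ — and (ii) then succeeds in its independent coin flip with probability $p$. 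Hence the number of ``parallel'' edges between $i$ and $j$ is a $\Bin\!\left(\binom{n-2}{k-2},\,\tfrac{p}{\binom{k}{2}}\right)$ random variable, and these counts for distinct pairs are \emph{not} independent (a single hyperedge $H$ contributes to only one pair, so the pairs inside a common $H$ compete), but they are \emph{negatively associated} in the sense needed for the standard coupling arguments, and in particular each pair is present with probability
\[
p^{\ast}_n \;=\; 1-\left(1-\tfrac{p}{\binom{k}{2}}\right)^{\binom{n-2}{k-2}}.
\]

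First I would make the two one-sided couplings precise. For the $1$-statement, note that the underlying simple graph of $\mG(n,p\,;\,\binom{[n]}{k})$ stochastically dominates $\mG(n,p^{\ast}_n)$: indeed one can generate the multigraph by first exposing, independently for each hyperedge, its chosen-and-flipped doubleton, and the resulting edge set contains an independent-percolation graph with edge probability $\ge$ some $\underline{p}_n$ with $\underline{p}_n=\Theta(p^{\ast}_n)$; since $\mP$ is monotone increasing, $\Prb(\mG(n,p_n\,;\,\binom{[n]}{k})\in\mP)\ge\Prb(\mG(n,\underline{p}_n)\in\mP)$, and the latter tends to $1$ whenever $\underline{p}_n\gg p_n^{\mathrm{thr}}$, where $p_n^{\mathrm{thr}}$ denotes a threshold for $\mP$ in $\mG(n,p)$ guaranteed by hypothesis. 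This gives the sequence $\{c_n^{\ast}\}_n$ (after translating $\underline p_n$ back to a bound on $p_n$ via $p^{\ast}_n\asymp p_n\binom{n-2}{k-2}$ in the relevant regime, or $p^\ast_n\asymp 1$ when $p_n\binom{n-2}{k-2}\gg 1$). For the $0$-statement, I would bound the multigraph from above: its simple graph is a subgraph of the graph in which $\{i,j\}$ is present iff the $\Bin$ count above is positive, so it is dominated by $\mG(n,\overline p_n)$ with $\overline p_n = p^{\ast}_n \le p_n\binom{n-2}{k-2}$; monotonicity again gives $\Prb(\mG(n,p_n\,;\,\binom{[n]}{k})\in\mP)\le\Prb(\mG(n,\overline p_n)\in\mP)\to 0$ when $\overline p_n\ll p_n^{\mathrm{thr}}$, yielding $\{b_n^{\ast}\}_n$.

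The two sequences $\{b_n^\ast\}$ and $\{c_n^\ast\}$ will generally differ by a constant (or polylog) factor — this is exactly the ``gap'' in the statement — because the coupling from below loses a factor related to the dependence among the parallel-edge counts, whereas the coupling from above is essentially tight; reconciling the two into a single threshold is precisely what Conjecture~\ref{Bollobas_thomason} asks and is \emph{not} attempted here. The main obstacle in carrying out the plan is the lower coupling: the events ``pair $\{i,j\}$ gets an edge'' are positively correlated in a mild way only through shared hyperedges, so one cannot directly assert $\mG(n,p\,;\,\binom{[n]}{k})\supseteq\mG(n,p^\ast_n)$ as an exact stochastic domination of independent-edge graphs. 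I would handle this either by a thinning argument — independently retain, for each pair, its edge only if an auxiliary coin of suitable probability succeeds, chosen so that the retained edges are mutually independent and each present with probability $\underline p_n=\Theta(p^\ast_n)$ — or by invoking a negative-association / FKG-type inequality for the product measure over hyperedge choices to get $\Prb(\text{all of a fixed edge set present})\ge\prod(\text{marginals})$, which suffices to dominate $\mG(n,\underline p_n)$ for the purpose of monotone increasing properties. Verifying that the surviving edge probability $\underline p_n$ is still of the same order as $p^\ast_n$ (so that the threshold of $\mP$ is crossed) is the only quantitative point that needs care, and it follows from the elementary inequality $1-(1-x)^m\ge \tfrac{1}{2}\min\{1,mx\}$ together with a union-bound control of the correlations.
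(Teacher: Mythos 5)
Your plan takes a genuinely different route from the paper's and in fact aims at something strictly stronger: a two-sided coupling at the single scale $p_n^{\ast}\asymp p_n\binom{n-2}{k-2}$, which---if it could be carried out---would yield $b_n^{\ast}\asymp c_n^{\ast}$ and thereby prove Conjecture~\ref{Bollobas_thomason}, not merely the gapped Theorem~\ref{gap_threshold}. The paper deliberately proves less with much less machinery. Its $0$-statement simply takes $b_n^{\ast}=1/n^k$ and invokes Lemma~\ref{all_edges}: if $p_n\ll 1/n^k$ the multigraph is empty whp, and an empty graph fails the non-trivial property $\mP$. Its $1$-statement takes $c_n^{\ast}$ to be the threshold of $\mP$ in $\mG(n,p)$ itself (with no rescaling by $n^{k-2}$), combining the mixture representation (Theorem~\ref{mixture}), the fact that a random shadow $\mD$ is complete whp (Corollary~\ref{randd}), and monotonicity in $\mH$ (Theorem~\ref{monot:2}) to get $\Prb(G_n\in\mP)\ge\pi_n\cdot\Prb(\mG(n,p_n)\in\mP)\to 1$. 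The $b_n^{\ast}$ and $c_n^{\ast}$ so produced can differ by a polynomial factor; only their existence is claimed.

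The genuine gap in your argument lies in the two coupling claims, neither of which is established, and one of which is likely false as stated. For the $0$-statement you need the simple skeleton of $\mG(n,p\,;\,\binom{[n]}{k})$ to be stochastically dominated from above by $\mG(n,p_n^{\ast})$ on all increasing events; but the edge indicators are negatively correlated through shared hyperedges, and negative correlation does \emph{not} give such an upper bound. Indeed, for ``union''-type increasing events negative correlation \emph{raises} the probability above the independent value: with $X_2=1-X_1$ and $\Prb(X_1=1)=1/2$, one has $\Prb(X_1+X_2\ge 1)=1>3/4$. For the $1$-statement the fixes you sketch are internally inconsistent: thinning by auxiliary independent coins cannot turn dependent indicators into independent ones, and the inequality $\Prb(\text{all of a fixed edge set present})\ge\prod(\text{marginals})$ is a \emph{positive}-association (Harris/FKG) statement, the opposite of the negative association you invoked a few lines earlier. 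Even granting that inequality, domination on cylinder events does not by itself give $\Prb(\mG\in\mP)\ge\Prb(\mG(n,\underline p_n)\in\mP)$ for an arbitrary monotone $\mP$. What is needed is an explicit coupling, not a correlation inequality, and none is supplied. The paper avoids this difficulty entirely by accepting the large gap between $b_n^{\ast}$ and $c_n^{\ast}$---which is precisely what ``gapped threshold'' means.
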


Observe that Theorem~\ref{gap_threshold} does not provide a ``gapped threshold" for all increasing properties $\mP$, but only for properties that admit thresholds in $\mG(n,p)$. For example, it provides no information for the property  $\mP=\,$``the graph is simple". 

The reader familiar with $\mG(n,p)$ will have already noticed several analogies between the binomial random graph and the binomial random multigraph models, and we will illustrate this analogy further. 
Our next two results show that the probability that the latter model possesses 
an increasing property is, similarly to the former model,  an increasing function of $p$, as well as an increasing function of $\mH$.

\begin{theorem}\label{monot:1}
Fix a positive integer $n$, two real numbers $p_1,p_2\in [0,1]$, and a hypergraph $\mH$ on $[n]$. 
If $p_1\le p_2$ then, for every monotone increasing property $\mathcal{P}$, it holds
\[
\Prb(\mG(n,p_1\, ; \, \mH) \in \mathcal{P}) \le \Prb(\mG(n,p_2\, ; \, \mH) \in \mathcal{P}) \, .
\]
\end{theorem}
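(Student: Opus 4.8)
The plan is to prove the inequality by a standard monotone coupling, with minor care taken for the multigraph structure and for the possibility that $\mH$ is random. First I would dispose of the randomness of $\mH$: since $\mH$ is generated before, and independently of, the doubleton choices and the subsequent coin flips, and since the parameter $p$ plays no role in the distribution of $\mH$, it suffices to prove the inequality conditionally on each fixed realisation of $\mH$ and then take expectations over $\mH$. So from now on fix a deterministic hypergraph $\mH$ on $[n]$.

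Next I would realise both multigraphs on one probability space. For each $H\in\mH$ introduce a doubleton $D_H$ chosen uniformly at random from $\binom{H}{2}$ together with a random variable $U_H$ uniform on $[0,1]$, all of these mutually independent. For $i\in\{1,2\}$ let $G_i$ be the multigraph on $[n]$ in which, for every $H\in\mH$, one copy of the edge on the pair $D_H$ is added precisely when $U_H\le p_i$. By construction the doubleton selections and the inclusion events have exactly the joint law prescribed by the model, so $G_i\sim\mG(n,p_i\, ; \,\mH)$, and in particular the multiplicity of a pair $\{u,v\}$ in $G_i$ equals $|\{H\in\mH : D_H=\{u,v\},\ U_H\le p_i\}|$.

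The key step is the observation that, since $p_1\le p_2$, the event $\{U_H\le p_1\}$ is contained in $\{U_H\le p_2\}$ for every $H\in\mH$; hence the multiplicity of every pair in $G_1$ is at most its multiplicity in $G_2$, i.e.\ $G_2$ is obtained from $G_1$ by successively adding edges (including, possibly, parallel ones). Because $\mathcal{P}$ is monotone increasing, $G_1\in\mathcal{P}$ implies $G_2\in\mathcal{P}$, so $\{G_1\in\mathcal{P}\}\subseteq\{G_2\in\mathcal{P}\}$ and therefore $\Prb(G_1\in\mathcal{P})\le\Prb(G_2\in\mathcal{P})$; averaging over the realisations of $\mH$ gives the claim. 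I do not expect a genuine obstacle here; the only point that needs a careful word is the multigraph bookkeeping, namely that parallel edges coming from distinct hyperedges mapping to the same pair are correctly handled by the coupling and that ``monotone increasing'' is read as closure under the addition of an edge that may already be present.
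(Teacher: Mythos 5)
Your proof is correct and uses essentially the same idea as the paper's: construct a monotone coupling $(G_1,G_2)$ with $G_1\subseteq G_2$ and $G_i\sim\mG(n,p_i\,;\,\mH)$, then invoke the monotonicity of $\mP$. The only difference is in the mechanism: you use the standard uniform-variable coupling (include the edge for $H$ when $U_H\le p_i$), whereas the paper first samples $G_1$ at parameter $p_1$ and then ``sprinkles'' each failed doubleton with an extra independent coin of bias $p$ chosen so that $p_2=p_1+(1-p_1)p$; both constructions yield the same joint law and the conclusion is identical.
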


\begin{theorem}\label{monot:2}
Fix a positive integer $n$, a real number $p\in [0,1]$, and two hypergraphs $\mH_1,\mH_2$ on $[n]$. 
If $\mH_1 \subset\mH_2$ then, for every monotone increasing property $\mathcal{P}$, it holds
\[
\Prb(\mG(n,p\, ; \, \mH_1) \in \mathcal{P}) \le \Prb(\mG(n,p\, ; \, \mH_2) \in \mathcal{P}) \, .
\]
\end{theorem}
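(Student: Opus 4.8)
The plan is to prove Theorem~\ref{monot:2} via a coupling argument, exactly analogous to the standard proof that $\Prb(\mG(n,p_1)\in\mP)\le\Prb(\mG(n,p_2)\in\mP)$ when $p_1\le p_2$. Since $\mH_1\subset\mH_2$, we may write $\mH_2$ as the disjoint union of $\mH_1$ together with the remaining hyperedges $\mH_2\setminus\mH_1$. First I would construct both random multigraphs on a common probability space: for every hyperedge $H\in\mH_2$, draw a uniformly random doubleton $D_H\in\binom{H}{2}$ and an independent $\{0,1\}$-indicator that equals $1$ with probability $p$; do this independently across all $H\in\mH_2$. Let $G_2=\mG(n,p\,;\,\mH_2)$ be the multigraph whose edges are the $D_H$ for which the corresponding indicator is $1$, ranging over all $H\in\mH_2$. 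Let $G_1$ be the sub-multigraph obtained by using only the $H\in\mH_1$. By construction, $G_1$ has the distribution of $\mG(n,p\,;\,\mH_1)$ (the choices indexed by $\mH_1$ are an independent sub-family with exactly the right joint law), and $G_2$ has the distribution of $\mG(n,p\,;\,\mH_2)$.

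Next I would observe that, pointwise on this coupled space, $G_1\subset G_2$: every edge present in $G_1$ arises from some $H\in\mH_1\subset\mH_2$ whose indicator fired, hence is also present in $G_2$ — that is, $G_2$ is obtained from $G_1$ by adding the edges contributed by the hyperedges in $\mH_2\setminus\mH_1$ (possibly creating multi-edges, which is fine in the multigraph setting). Since $\mP$ is monotone increasing and is preserved under adding edges, the event $\{G_1\in\mP\}$ is contained in the event $\{G_2\in\mP\}$ on this space. Taking probabilities gives
\[
\Prb(\mG(n,p\,;\,\mH_1)\in\mP)=\Prb(G_1\in\mP)\le\Prb(G_2\in\mP)=\Prb(\mG(n,p\,;\,\mH_2)\in\mP),
\]
which is the claim.

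The only genuine point to check carefully — and the one I'd flag as the main (minor) obstacle — is the first step: verifying that the marginal law of $G_1$ in the coupling is really that of $\mG(n,p\,;\,\mH_1)$, and likewise for $G_2$. This is a matter of unwinding the definition of the model in Section~\ref{sec:model}: the model specifies that the per-hyperedge choices (the doubleton draw followed by the independent $p$-coin) are mutually independent across hyperedges, so restricting the product space indexed by $\mH_2$ to the coordinates indexed by $\mH_1$ yields precisely the product space defining $\mG(n,p\,;\,\mH_1)$. One should also note that if $\mH$ is itself random, the statement is understood conditionally on $\mH_1,\mH_2$ (with $\mH_1\subset\mH_2$ holding on the relevant event), and the inequality then passes through the expectation over the hypergraph; but in the deterministic formulation stated here this subtlety does not even arise. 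No monotonicity-in-$p$ input is needed, so this proof is logically independent of Theorem~\ref{monot:1}, though the two are proved by the same coupling technique.
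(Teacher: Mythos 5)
Your coupling is exactly the argument the paper gives: the paper starts from $G_1\in\mG(n,p\,;\,\mH_1)$ and adds the independent contributions of $\mH_2\setminus\mH_1$ to obtain $G'\in\mG(n,p\,;\,\mH_2)$ with $G_1\subset G'$, which is the same product-space coupling you describe (read in the other direction). Your extra remarks on verifying the marginals and on the random-$\mH$ case are fine but not needed; the proof is correct and matches the paper's.
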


We also provide some additional evidence for the validity of Conjecture~\ref{Bollobas_thomason}, by 
determining the limiting probability, $\Prb(\mG(n,p,\mH)\in \mP)$, for some graph properties $\mP$ in various random multigraph models. 
We begin with a  result  which is related to the ``simplicity threshold". 

\begin{theorem}\label{thm:simple0} 
Let $\mP$ denote the property ``the graph is simple", and let $k\ge 3$ be fixed. The following hold true.
\begin{enumerate}
\item If $p_n\ll \frac{1}{n^k}$, then  
\[
\lim_{n\to\infty} \Prb\left(\mG(n,p_n\, ; \,\binom{[n]}{k})\in\mP\right) = 1 \, .
\]
\item If $p_n\gg \frac{1}{n^{k-1}}$, then  
\[
\lim_{n\to\infty} \Prb\left(\mG(n,p_n\, ; \,\binom{[n]}{k})\in\mP\right) = 0 \, .
\]
\end{enumerate}
\end{theorem}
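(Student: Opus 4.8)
Both statements will follow by applying the first and second moment methods to a single random variable. For a hyperedge $H\in\binom{[n]}{k}$ let $D_H\in\binom{H}{2}$ denote the doubleton sampled inside $H$, and let $\xi_H\in\{0,1\}$ be the indicator that the coin flipped for $H$ is a success, so that $H$ contributes the edge $D_H$ to $\mG(n,p_n\,;\,\binom{[n]}{k})$ if and only if $\xi_H=1$; the families $\{D_H\}_H$ and $\{\xi_H\}_H$ are mutually independent. Set
\[
Y\;=\;\#\left\{\{H_1,H_2\}\ :\ H_1,H_2\in\binom{[n]}{k},\ H_1\neq H_2,\ D_{H_1}=D_{H_2},\ \xi_{H_1}=\xi_{H_2}=1\right\}.
\]
A doubleton $e$ acquires multiplicity at least two in $\mG(n,p_n\,;\,\binom{[n]}{k})$ exactly when at least two hyperedges sample $e$ and survive their coin flips, i.e.\ exactly when some pair counted by $Y$ has common doubleton $e$; hence the multigraph is simple if and only if $Y=0$. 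It therefore suffices to decide when $\Prb(Y=0)$ tends to $1$ and when it tends to $0$.

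For part (1) I would use Markov's inequality. A pair $\{H_1,H_2\}$ can be counted by $Y$ only if $t:=|H_1\cap H_2|\ge 2$, in which case the associated probability equals $\binom{t}{2}\binom{k}{2}^{-2}p_n^2$. Since the number of pairs of distinct hyperedges with $|H_1\cap H_2|=t$ is of order $n^{2k-t}$ (choose $H_1$, the $t$ common elements inside it, and the remaining $k-t$ elements of $H_2$ outside $H_1$), summing over $t\in\{2,\dots,k-1\}$ and noting that $t=2$ dominates gives $\Exp[Y]=\Theta(n^{2k-2}p_n^2)$. Thus if $p_n\ll n^{-k}$ then $\Exp[Y]\to 0$, whence $\Prb(Y\ge 1)\le\Exp[Y]\to 0$; in fact the same bound delivers part (1) in the wider range $p_n\ll n^{-(k-1)}$.

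For part (2) I would use Chebyshev's inequality in the form $\Prb(Y=0)\le\mathrm{Var}(Y)/(\Exp Y)^2$, so it is enough to prove $\mathrm{Var}(Y)=o\bigl((\Exp Y)^2\bigr)$. Expanding $\Exp[Y^2]$ as a sum over ordered pairs of (unordered) pairs of distinct hyperedges: the diagonal contributes $\Exp[Y]=\Theta(n^{2k-2}p_n^2)$; a term indexed by two hyperedge-disjoint pairs factorises by independence, so these sum to at most $(\Exp Y)^2$; and the remaining, genuinely correlated, contribution comes from two pairs $\{A,B\}$ and $\{A,C\}$ sharing a single hyperedge, which forces $D_A=D_B=D_C$ to be a common doubleton of $A\cap B\cap C$ together with three coin successes, hence carries probability $O(p_n^3)$ for each of the $O(n^{3k-4})$ such configurations (choose the common doubleton, then $A,B,C$ each containing it). Therefore $\mathrm{Var}(Y)=O(n^{2k-2}p_n^2)+O(n^{3k-4}p_n^3)$, and
\[
\Prb(Y=0)\;=\;O\!\left(\frac{1}{n^{2k-2}p_n^2}\right)+O\!\left(\frac{1}{n^{k}p_n}\right).
\]
When $p_n\gg n^{-(k-1)}$ the first term tends to $0$ because $n^{2k-2}p_n^2\to\infty$, and the second tends to $0$ because then also $p_n\gg n^{-k}$; this yields part (2).

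The first moment estimate is routine bookkeeping. The main obstacle is the variance bound in part (2): one has to classify the pairs of colliding-pair events according to how many hyperedges they share, recognise that only the ``share exactly one hyperedge'' families carry a non-trivial (but lower-order) correlation of size $\Theta(n^{3k-4}p_n^3)$, and verify the inequality $n^{3k-4}p_n^3\ll(\Exp Y)^2=\Theta(n^{4k-4}p_n^4)$, i.e.\ $p_n\gg n^{-k}$, which holds throughout the regime $p_n\gg n^{-(k-1)}$. One should also check that the degenerate configurations (two shared hyperedges, or coincidences among $A,B,C$) are rarer still and are absorbed by the stated error terms.
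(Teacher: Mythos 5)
Your proof is correct and takes a genuinely different route from the paper's. The paper handles part~(1) trivially by bounding $\Prb(\text{simple})$ below by $\Prb(\text{empty})$ via Lemma~\ref{all_edges}, and handles part~(2) by conditioning on the shadow $\mD\in\mathscr{S}(\binom{[n]}{k})$: once the doubletons are fixed, the multi-edge indicators $\{I_B(\mD)\}_{B\in\mF_\mD}$ become mutually independent coin-flip events, so the variance bound is immediate ($\text{Var}(Y_\mD)\le\Exp(Y_\mD)$), at the cost of needing uniform control over the shadow's multiplicity structure (the auxiliary families $\mF_\mD^{(k)}$ and Lemma~\ref{bin_ineq}). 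You instead work unconditionally with the collision count $Y$ over pairs of hyperedges; this forfeits the automatic independence but your explicit decomposition of $\text{Var}(Y)$ into diagonal, one-shared-hyperedge, and disjoint contributions is clean and correct: the one-shared-hyperedge term $O(n^{3k-4}p_n^3)$ is dominated by $(\Exp Y)^2=\Theta(n^{4k-4}p_n^4)$ precisely when $p_n\gg n^{-k}$, which holds throughout the regime $p_n\gg n^{-(k-1)}$. Notably, your approach gives a strictly stronger conclusion than the theorem as stated: the first-moment bound $\Exp[Y]=\Theta(n^{2k-2}p_n^2)$ already shows $\Prb(\text{simple})\to 1$ whenever $p_n\ll n^{-(k-1)}$, not merely $p_n\ll n^{-k}$, so combined with your part~(2) you identify $n^{-(k-1)}$ as the \emph{exact} threshold for simplicity. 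This closes the gap the paper leaves open and in fact refutes the paper's speculation (in the remark following the theorem) that the simplicity threshold might coincide with the $n^{-k}$ threshold for containing an edge. You may wish to state this explicitly as a corollary of your argument.
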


In other words, Theorem~\ref{thm:simple0} provides estimates on the threshold function for the property ``the graph is simple".
We are unable to determine an exact threshold for this property. It is easy to see that a threshold 
for the property ``the graph contains at least one edge" is $\frac{1}{n^k}$ (see Lemma~\ref{all_edges} below) and, since it is not unreasonable to expect that $\mG(n,p,\binom{[n]}{k})$ contains multiple edges as long as it has some edges, perhaps a simplicity threshold equals $\frac{1}{n^k}$ as well.

Theorem~\ref{thm:simple0} has no analogue in $\mG(n,p)$ but, for graph properties that make sense in both models, there seems to be an analogy between thresholds in $\mG(n,p)$ and $\mG(n,p\, ; \,\binom{[n]}{k})$. However, in the latter case the  calculations involving expectations and variances of various random 
variables are significantly more complex. 
A reason for this ``complexity" is that the model $\mG(n,p\, ; \,\binom{[n]}{k})$,  for $k\ge 3$, exhibits certain ``dependencies"  which are not present in  $\mG(n,p)$. 
Let us illustrate this a bit further. 

For $B\in\binom{[n]}{2}$, let $E_B$ be the number of edges between the two vertices in $B$. Then, in the model  $\mG(n,p\, ; \,\binom{[n]}{k}), k\ge 3$,  the random variables $\{E_B\}_{B\in\binom{[n]}{2}}$ are correlated; a fact that complicates the calculation of moments and probabilities. 
For example, in order to compute the probability that a particular triangle, say $T$, is present in $\mG(n,p_n\, ; \,\binom{[n]}{k})$,  we have to compute the probability that the two vertices of every  $B\in \binom{T}{2}$ are joined by at least one edge. The latter probability depends, among other things, on the probability 
that the coin-flips corresponding to 
the elements in $\mH_T = \{H\in\binom{[n]}{k} : T\subset H\}$ resulted in $\ell$ edges between the vertices of $T$, where $\ell \in\{0,1,\ldots,|\mH_T|\}$, and we are unable to find a neat way to calculate such probabilities for  $k> 3$. When $k=3$, then $\mH_T$ consists of a single element, a fact that simplifies the calculations, as we show in the proof of Theorem~\ref{thm:triangle_rand}  below.  
The proofs of the following results illustrate further the aforementioned dependencies.

\begin{theorem}\label{thm:connected}
The property ``the graph is connected" has threshold function $\frac{\log(n)}{n^2 }$ in $\mG(n,p\, ; \,\binom{[n]}{3})$.
\end{theorem}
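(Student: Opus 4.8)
The plan is to mirror the classical proof that $\frac{\log n}{n}$ is the connectivity threshold for $\mathbb{G}(n,p)$, adapting every step to account for the dependencies among the edge variables $\{E_B\}_{B\in\binom{[n]}{2}}$ in the model $\mathbb{G}(n,p\,;\,\binom{[n]}{3})$. The key structural observation is that for a fixed pair $B=\{i,j\}$, the hyperedges $H\in\binom{[n]}{3}$ with $B\subset H$ are exactly the $n-2$ triples $\{i,j,v\}$ with $v\in[n]\setminus\{i,j\}$; each such $H$ independently picks one of its three doubletons uniformly (so it picks $B$ with probability $\tfrac13$) and then flips a $p$-coin. Hence $\Prb(E_B=0) = \bigl(1-\tfrac{p}{3}\bigr)^{n-2}$, and more generally the event $\{E_B\ge 1\}$ is a monotone function of the underlying coin flips and doubleton choices.

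\textbf{The $1$-statement ($p_n\gg \frac{\log n}{n^2}$): no isolated vertices, and connectivity.} I would first show that a.a.s.\ there are no isolated vertices, then bootstrap to connectivity. For a vertex $v$, it is isolated iff every hyperedge touching $v$ either fails to select a doubleton containing $v$ or selects one but fails its coin flip. Writing $q = 1-(1-\tfrac{p}{3})^{n-2}\approx \tfrac{pn}{3}$ for the probability that a fixed pair is joined, one computes $\Prb(v \text{ isolated})$ by conditioning on the doubleton choices of the $\binom{n-1}{2}$ triples containing $v$; the cleanest route is to note $\Prb(v\text{ isolated}) = \Exp\bigl[(1-p)^{N_v}\bigr]$ where $N_v$ is the number of those triples whose chosen doubleton contains $v$, and $N_v\sim\Bin\!\bigl(\binom{n-1}{2},\tfrac23\bigr)$ concentrates around $\tfrac{n^2}{3}$, giving $\Prb(v\text{ isolated}) = e^{-\Theta(pn^2)}(1+o(1))$. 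When $pn^2\gg\log n$ this is $o(1/n)$, so a union bound over $v$ kills all isolated vertices. To upgrade to connectivity, I would bound the expected number of connected components of size $s$ for $2\le s\le n/2$: if $S$ is such a component then all $s(n-s)$ pairs in the cut $(S,[n]\setminus S)$ have $E_B=0$. Here the dependency bites, since the triples $\{i,j,k\}$ with exactly two vertices in $S$ are shared among several cut-pairs; but each such triple has probability at most $1-\tfrac{2p}{3}\le (1-\tfrac p3)^2$ of avoiding \emph{both} its $S$-to-$\bar S$ doubletons, and triples with one vertex in $S$ contribute a factor $(1-\tfrac p3)$ per cut-pair they could serve, so one gets an upper bound of the shape $\binom{n}{s}\,(1-\tfrac p3)^{c\,s(n-s)}$ for an absolute constant $c>0$; summing over $s$ and using $p n\gg \tfrac{\log n}{n}$ shows this is $o(1)$. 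Combining the two, $\mathbb{G}(n,p_n\,;\,\binom{[n]}{3})$ is a.a.s.\ connected.

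\textbf{The $0$-statement ($p_n\ll \frac{\log n}{n^2}$): an isolated vertex exists.} I would use the second moment method on $X = \#\{\text{isolated vertices}\}$. From the computation above, $\Exp[X] = n\cdot e^{-\Theta(p n^2)}(1+o(1)) \to\infty$ when $pn^2 - \log n \to -\infty$ (one should check the regime $p n^2 = \Theta(\log n)$ is handled by the constants, or simply restrict to $p_n \ll \frac{\log n}{n^2}$ where $\Exp[X]\to\infty$ comfortably). For the variance I need $\Prb(u,v\text{ both isolated})$; conditioning on doubleton choices, $u$ and $v$ are both isolated iff all relevant coin flips fail, and the triples split into those containing exactly one of $u,v$ and the triples $\{u,v,w\}$ containing both — the latter, $n-2$ of them, each has probability at most $(1-\tfrac{2p}{3})$ of keeping both $u,v$ uncovered. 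One obtains $\Prb(u,v\text{ isolated}) \le \Prb(u\text{ isolated})\Prb(v\text{ isolated})\cdot e^{O(pn)}$, and since $pn\to 0$ in this range the correlation factor is $1+o(1)$, so $\Exp[X^2] = \Exp[X]^2(1+o(1)) + \Exp[X]$ and Chebyshev gives $X\ge 1$ a.a.s.; an isolated vertex precludes connectivity, so $\Prb(\mathbb{G}\in\mathcal{P})\to 0$.

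\textbf{Main obstacle.} The genuinely delicate part is the small-component bound in the $1$-statement: unlike in $\mathbb{G}(n,p)$, the events $\{E_B = 0\}$ across cut-pairs $B$ are positively or negatively correlated through shared triples, so I cannot simply multiply probabilities. The fix is to organize the product by hyperedges rather than by pairs — assign to each relevant triple the factor recording the probability that \emph{none} of its doubletons lying in the cut get realized — and verify this yields a clean exponential bound $(1-\tfrac p3)^{\Omega(s(n-s))}$; getting the constant in the exponent right (and checking it beats $\binom{n}{s}$ uniformly in $s$) is where the real work lies. Everything else is a routine, if notation-heavy, translation of the $\mathbb{G}(n,p)$ arguments, using Theorem~\ref{monot:1} and Theorem~\ref{monot:2} to reduce to the exact thresholds $p_n = (1\pm\eps)\frac{\log n}{n^2}$ if a cleaner coupling argument is preferred.
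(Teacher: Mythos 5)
Your overall architecture matches the paper's: establish the isolated-vertex threshold via first and second moments, and handle all small components in the $1$-statement via a first-moment bound on the expected number of cut-free vertex subsets of each size $s \le n/2$. The isolated-vertex computation is correct — your $\Exp\bigl[(1-p)^{N_v}\bigr]$ with $N_v\sim\Bin\bigl(\binom{n-1}{2},\tfrac23\bigr)$ evaluates exactly to $\bigl(1-\tfrac{2p}{3}\bigr)^{\binom{n-1}{2}}$, which is what the paper uses directly. The $0$-statement is also sound.

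The small-component bound, however, contains a genuine error: your claimed upper bound $\binom{n}{s}\bigl(1-\tfrac{p}{3}\bigr)^{c\,s(n-s)}$ has the wrong exponent, off by a factor of $\Theta(n)$. The correct count is this: for a set $M$ with $|M|=s$, the triples that could create a cut edge are precisely those with one or two vertices in $M$, and there are $(n-s)\binom{s}{2}+s\binom{n-s}{2}=\tfrac{s(n-s)(n-2)}{2}$ of them. Each such triple independently fails to produce a cut edge with probability exactly $1-\tfrac{2p}{3}$ (it picks one of its two crossing doubletons with probability $\tfrac23$ and then must fail its coin flip), giving the exact identity
\[
\Exp[X_s]=\binom{n}{s}\left(1-\frac{2p}{3}\right)^{s(n-s)(n-2)/2}\, .
\]
The factor $(n-2)$ is essential: each cut pair $B$ is contained in $n-2$ triples, so the number of independent ``failure slots'' is $s(n-s)(n-2)$, not $s(n-s)$. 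You correctly identified that the product should be organized over hyperedges rather than pairs, but then miscounted the hyperedges. With your stated exponent the argument fails already at $s=1$: $\binom{n}{1}\bigl(1-\tfrac{p}{3}\bigr)^{c(n-1)}\approx n\,e^{-\Theta(pn)}\to\infty$ when $\tfrac{\log n}{n^2}\ll p_n\ll\tfrac{1}{n}$, so the claimed bound cannot yield the $\tfrac{\log n}{n^2}$ threshold — it would only establish a threshold at $\tfrac{\log n}{n}$. Restoring the missing factor of $n$ in the exponent recovers the paper's computation and the argument goes through.
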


Given Theorem~\ref{thm:connected} and bearing in mind the corresponding result in $\mG(n,p)$, it may be reasonable to expect that a connectivity threshold in $\mG(n,p\, ; \,\binom{[n]}{k}), k\ge 4,$ is $\frac{\log(n)}{n^{k-1}}$.

Having determined the connectivity threshold in $\mG(n,p\, ; \,\binom{[n]}{3})$, a next step may be to  
determine the ``triangle containment" threshold in this model. 
A \emph{triangle} in a multigraph $G$, with vertex set $[n]$, is a triplet of vertices $T\in\binom{[n]}{3}$ such that all pairs of vertices from $T$ are joined with at least one edge. 
A threshold for ``triangle containment" in $\mG(n,p\, ; \,\binom{[n]}{3})$ should be $\frac{1}{n^2}$, but we are unable to prove it. We can determine the order of the expected number of triangles in $\mG(n,p\, ; \,\binom{[n]}{3})$, but are unable to determine the order of its variance, due to the aforementioned ``dependencies". 
The next result, combined with standard first moment arguments, implies that the expected number of triangles in $\mG(n,p\, ; \, \mbH_{n,q;3})$ goes to zero, when $p_n\cdot q_n \ll \frac{1}{n^2}$.

\begin{theorem}\label{thm:triangle_rand}
The expected number of triangles in $\mG(n,p\, ; \, \mbH_{n,q;3})$ is equal to 
\[
\binom{n}{3}\cdot \left\{ (1-p\cdot q)\cdot \left( \Prb\left(\Bin\left(n-3,\frac{pq}{3}\right)\ge 1\right) \right)^3 + p\cdot q \cdot \left( \Prb\left(\Bin\left(n-3,\frac{pq}{3}\right)\ge 1\right) \right)^2 \right\}\, .
\]
\end{theorem}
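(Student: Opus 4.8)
The plan is to evaluate the expectation by linearity, so that the expected number of triangles equals $\binom{n}{3}$ times the probability that a fixed triple $T=\{a,b,c\}$ forms a triangle in $\mG(n,p\,;\,\mbH_{n,q;3})$. Write $e_1=\{a,b\}$, $e_2=\{b,c\}$, $e_3=\{a,c\}$ for the three pairs of $T$, so that $T$ is a triangle exactly when each $e_i$ carries at least one edge, and the task reduces to computing $\Prb\bigl(\text{all of }e_1,e_2,e_3\text{ are covered}\bigr)$.

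First I would classify the hyperedges of $\mbH_{n,q;3}$ according to which pairs of $T$ they can affect. A $3$-set $H$ can place an edge on $e_i$ only if $e_i\subset H$, and since $|H|=|T|=3$, the only hyperedge containing all of $T$ is $T$ itself (this is precisely where $k=3$ matters, as noted in the discussion of $\mH_T$ above). Every other relevant hyperedge has the form $\{x,y,v\}$ with $\{x,y\}\in\binom{T}{2}$ and $v\in[n]\setminus T$, and contains exactly one pair of $T$. Hence the relevant hyperedges other than $T$ split into three disjoint families, one for each $e_i$, each of size $n-3$; since distinct hyperedges are processed independently, the events $\{e_i\text{ is covered by some hyperedge}\ne T\}$, $i=1,2,3$, are mutually independent. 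A fixed hyperedge $\{x,y,v\}$ contributes the edge $e_i$ exactly when it is present (probability $q$), its sampled doubleton equals $e_i$ (probability $1/3$), and the associated coin is heads (probability $p$); so each of the three events has probability $1-(1-pq/3)^{n-3}=\Prb(\Bin(n-3,pq/3)\ge 1)=:\beta_n$.

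Next I would condition on the action of the single hyperedge $T$. With probability $1-pq$ it produces no edge (it is absent, or present but with a tails coin), and with probability $pq$ it produces exactly one edge, on a pair that is uniformly distributed over $\{e_1,e_2,e_3\}$; in either case this outcome is independent of everything the other hyperedges do. On the first event, $T$ is a triangle iff all three pairs are covered by other hyperedges, which by the independence established above has probability $\beta_n^3$. On the second event, the pair covered by $T$ is automatically satisfied and $T$ is a triangle iff the remaining two pairs are covered by other hyperedges, probability $\beta_n^2$. Combining the two cases gives $\Prb(T\text{ is a triangle})=(1-pq)\beta_n^3+pq\,\beta_n^2$, and multiplying by $\binom{n}{3}$ yields the claimed identity. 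I do not anticipate a genuine obstacle; the one step that needs care is the independence claim, namely that the three ``external'' coverage events are mutually independent and independent of the action of $T$, which rests entirely on the fact that, for $k=3$, no hyperedge other than $T$ meets $T$ in more than one pair (and which, as the authors remark, breaks down for $k\ge 4$).
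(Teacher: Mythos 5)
Your proof is correct and follows essentially the same route as the paper: reduce by linearity to a fixed $T$, observe that the hyperedges other than $T$ split into three disjoint families (one per pair of $T$), so the three external coverage events are independent, and then condition on what the hyperedge $T$ itself does. You streamline slightly by collapsing presence and coin-flip into a single $\mathrm{Bernoulli}(pq/3)$ trial per external hyperedge and a single $\mathrm{Bernoulli}(pq)$ event for $T$, whereas the paper first conditions on $T\in\mbH_{n,q;3}$, tracks the counts $X_i\sim\Bin(n-3,q)$, and then composes binomials to reach $\Bin(n-3,pq/3)$; the underlying decomposition and the crucial $k=3$ disjointness observation are the same.
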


Note that  the expected number of triangles in $\mG(n,p\, ; \,\binom{[n]}{3})$ are obtained from  Theorem~\ref{thm:triangle_rand} for $q=1$. The expected number of triangles in $\mG(n,p\, ; \, \mbH_{n,m;3})$ 
is of  similar form and illustrates the, rather intuitive, idea that $\mG(n,p\, ; \, \mbH_{n,q;3})$ and 
$\mG(n,p\, ; \, \mbH_{n,m;3})$ should be ``close" to each other when $q=\frac{m}{\binom{n}{3}}$. 

\begin{theorem}\label{thm:triangle_rand2}
The expected number of triangles in $\mG(n,p\, ; \, \mbH_{n,m;3} )$ is equal to 
\[
\binom{n}{3}\cdot\left\{\left(1- p\cdot\frac{m}{\binom{n}{3}}\right) \cdot    \left(\Prb\left(\Bin(W,p)\ge 1\right) \right)^3   + p \cdot\frac{m}{\binom{n}{3}}\cdot    \left(\Prb\left(\Bin(W,p)\ge 1 \right)\right)^2  \right\} \, , 
\]
where $W\sim\Hyp(\binom{n}{3}, n-3, m)$. 
\end{theorem}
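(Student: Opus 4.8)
The plan is to run the argument behind Theorem~\ref{thm:triangle_rand}, replacing the mutually independent hyperedge indicators of the binomial hypergraph $\mbH_{n,q;3}$ by the (negatively correlated) indicators of the uniform hypergraph $\mbH_{n,m;3}$; this is exactly where a hypergeometric variable appears. By linearity of expectation it suffices to fix one triplet $T=\{a,b,c\}$ and compute the probability that $T$ is a triangle in $\mG(n,p\,;\,\mbH_{n,m;3})$, the expected number of triangles being $\binom{n}{3}$ times this probability. Only two kinds of triples of $\mbH_{n,m;3}$ can place an edge inside $T$: the triple $T$ itself, and the $3(n-3)$ triples meeting $T$ in exactly two vertices. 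For a pair $B\in\binom{T}{2}$ write $\mF_B$ for the family of the $n-3$ triples $B\cup\{x\}$ with $x\in[n]\setminus T$. A triple of $\mF_B$ that lies in $\mbH_{n,m;3}$ places an edge on $B$ exactly when its uniformly chosen doubleton equals $B$ (probability $\tfrac13$) and its coin comes up heads (probability $p$); conditionally on $\mbH_{n,m;3}$ these events are independent over the triples of $\mF_B$, and since $\mF_{ab},\mF_{ac},\mF_{bc}$ are pairwise disjoint, conditionally on the hypergraph the three pairs of $T$ are covered ``from their own families'' independently. The only coupling among the three pairs comes from the triple $T$ itself, whose single chosen doubleton lands on at most one of them.

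I would therefore expose $\mbH_{n,m;3}$ first, recording whether $T\in\mbH_{n,m;3}$ together with the three counts $W_B=|\mF_B\cap\mbH_{n,m;3}|$, and then condition further on the doubleton and coin attached to $T$. Given this data, a pair $B$ is covered from $\mF_B$ with probability $1-(1-p/3)^{W_B}=\Prb(\Bin(W_B,p/3)\ge1)$. If $T\notin\mbH_{n,m;3}$, or $T\in\mbH_{n,m;3}$ but its coin is tails, then $T$ contributes nothing and all three pairs must be covered from their families; if $T\in\mbH_{n,m;3}$ and its coin is heads (conditional probability $p$), then the doubleton of $T$, uniform among the three pairs, already covers one of them and only the remaining two must be covered from their families. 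Collecting the cases and using $\Prb(T\in\mbH_{n,m;3})=m/\binom{n}{3}$ gives, conditionally on $(W_{ab},W_{ac},W_{bc})$,
\[
\Bigl(1-\tfrac{pm}{\binom{n}{3}}\Bigr)\prod_{B\in\binom{T}{2}}\Prb\bigl(\Bin(W_B,p/3)\ge1\bigr)+\tfrac{pm}{3\binom{n}{3}}\sum_{\{B,B'\}\subset\binom{T}{2}}\Prb\bigl(\Bin(W_B,p/3)\ge1\bigr)\Prb\bigl(\Bin(W_{B'},p/3)\ge1\bigr).
\]
It then remains to average over the counts. Unconditionally each $W_B$ has law $\Hyp\bigl(\binom{n}{3},n-3,m\bigr)$ --- we draw $m$ triples without replacement from the $\binom{n}{3}$ available and count those landing in the fixed $(n-3)$-element family $\mF_B$ --- which is the hypergeometric variable $W$ of the statement; this produces the factor $\Prb(\Bin(W,p/3)\ge1)$ and its square, exactly as the $pq/3$ in Theorem~\ref{thm:triangle_rand} does for $q=m/\binom{n}{3}$.

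The main obstacle is the dependency structure, which enters on two levels. That $T$ can contribute to at most one of its pairs is the mild issue and is dispatched cleanly by conditioning on $T$'s doubleton, as above. The delicate point is that, unlike in the binomial hypergraph, the counts $W_{ab},W_{ac},W_{bc}$ are \emph{not} independent: they are components of a (truncated) multivariate hypergeometric and are negatively correlated, and conditioning on $\{T\in\mbH_{n,m;3}\}$ perturbs their joint law further (one fewer pick, one fewer triple in the pool). Pushing the expectation through the triple product --- and through each pairwise product --- so that it collapses to a power of the single factor attached to $W$, i.e. controlling or symmetrising away these correlations and the conditioning shift, is the heart of the computation; once that bookkeeping is in place, the displayed conditional expression integrates to the stated closed form.
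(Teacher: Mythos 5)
Your proposal follows the same conditioning strategy as the paper's proof: fix a triplet $T$, expose whether $T$ is itself a hyperedge, expose the counts $W_B$ of hyperedges other than $T$ containing each pair $B\in\binom{T}{2}$, condition on the doubleton and coin attached to $T$, and compute coverage probabilities for the three pairs via the three disjoint families $\mF_{ab},\mF_{ac},\mF_{bc}$. You are also right to use success probability $p/3$ inside the binomials, since the factor $1/3$ is the chance that a surviving hyperedge selects the relevant doubleton, exactly as in Theorem~\ref{thm:triangle_rand}; the paper's $\Bin(W,p)$ appears to be a slip.

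The genuine gap is precisely the point you flag and then decline to resolve. The paper's own proof simply asserts that the counts $W_1,W_2,W_3$ are independent and identically distributed, which is false: the families $\mF_{ab},\mF_{ac},\mF_{bc}$ are disjoint, but drawing $m$ triples without replacement makes the counts landing in disjoint blocks components of a multivariate hypergeometric law, so they are negatively correlated (for $m=1$ at most one of them can be nonzero). The reduction from $\Exp\bigl[\prod_{B}\Prb(\Bin(W_B,p/3)\ge 1)\bigr]$ to $\bigl(\Exp\,\Prb(\Bin(W,p/3)\ge 1)\bigr)^3$ needs exactly this independence and so does not go through, and indeed the stated closed form is already false for $n=4$, $m=2$: with two hyperedges one can place at most two edges, so the triangle probability is $0$, yet the right-hand side of the theorem is strictly positive for $p>0$. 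Your closing sentence, ``once that bookkeeping is in place, the displayed conditional expression integrates to the stated closed form,'' asserts the very step that fails; no rearrangement will make the expectation of a product of dependent functionals equal the product of their expectations. To salvage this one must either push the joint multivariate-hypergeometric law (shifted further by conditioning on $T\in\mbH_{n,m;3}$) through to the end and accept a messier exact formula, or downgrade the equality to an asymptotic one with an explicit bound on the correlation terms.
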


The purpose of our final result is to illustrate a bit further the above-mentioned ``complexities of calculations" for values of $k$ that are larger than three. 

\begin{theorem}\label{thm:triangle_four}
The expected number of triangles in $\mG(n,p\, ; \,\binom{[n]}{4})$ is equal to 
\[
\binom{n}{3}\cdot \left\{ \left(1- \sum_{i=0}^{2} p_{0i}^{(n-3)}\right) + 
\sum_{i=0}^{2} p_{0i}^{(n-3)} \cdot \left(\Prb\left( \Bin\left(\binom{n-3}{2}, \frac{p}{6} \right)\ge 1 \right)\right)^{3-i}  \right\} \, ,
\]
where $p_{0i}^{(n-3)}, i=0,1,2,$ is the $(0,i)$-th element of the $(n-3)$-th power of the following transition matrix 
of a homogeneous  Markov chain with state space $\{0,1,2,3\}$:
\[
P = 
\begin{blockarray}{ccccc}
 & 0 & 1 & 2 & 3 \\
\begin{block}{c[cccc]}
  0 & 1-p/2 & p/2 & 0 & 0  \\
  1 & 0    & 1-p/3    & p/3 & 0  \\
  2 & 0 & 0 & 1-p/6 & p/6  \\
  3 & 0 & 0 & 0 & 1  \\ 
\end{block}  
\end{blockarray}  \, \, .
 \]
 
\end{theorem}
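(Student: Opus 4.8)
The plan is to apply linearity of expectation over all triplets $T\in\binom{[n]}{3}$, so the whole problem reduces to computing the probability $\pi$ that a fixed triplet $T=\{a,b,c\}$ forms a triangle, i.e.\ that each of the pairs $\{a,b\}$, $\{a,c\}$, $\{b,c\}$ receives at least one edge; the answer is then $\binom{n}{3}\pi$. The first step is to sort the hyperedges $H\in\binom{[n]}{4}$ according to how they can affect $T$. If $|H\cap T|\le 1$ then $\binom{H}{2}$ contains no pair of $T$ and $H$ is irrelevant. If $|H\cap T|=2$, say $H=\{a,b,x,y\}$ with $x,y\notin T$, then $H$ can only create the edge $\{a,b\}$, and does so precisely when its uniformly chosen doubleton equals $\{a,b\}$ (probability $1/6$) and the coin comes up heads (probability $p$), i.e.\ with probability $p/6$; there are $\binom{n-3}{2}$ such hyperedges attached to each pair of $T$, the three resulting families (one per pair) are pairwise disjoint, and all their choices are mutually independent. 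If $|H\cap T|=3$, i.e.\ $H=T\cup\{x\}$ with $x\notin T$, then $\binom{H}{2}$ contains all three pairs of $T$ together with $\{a,x\},\{b,x\},\{c,x\}$; there are $n-3$ such hyperedges, and each of them interacts with all three pairs of $T$ at once. This last family is the source of the dependency among the edge-multiplicities $\{E_B\}$ that is absent in $\mG(n,p)$, and handling it is the crux of the proof.

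Next I would process the $n-3$ hyperedges $T\cup\{x\}$ one at a time. Each one, independently, adds an edge to exactly one of $\{a,b\},\{a,c\},\{b,c\}$, each with probability $p/6$, and with the remaining probability $1-p/2$ it does nothing relevant to $T$ (it either selects a doubleton meeting $T$ in a single vertex, or selects a pair of $T$ but the coin comes up tails). Tracking the number of distinct pairs of $T$ that have so far received an edge from this family gives a homogeneous Markov chain on $\{0,1,2,3\}$: from state $j$ a new pair is covered with probability $(3-j)\cdot p/6$ and the chain stays in $j$ otherwise, which is exactly the matrix $P$ in the statement (absorbing at $3$). Hence, writing $X$ for the state after all $n-3$ of these hyperedges have been processed, we get $\Prb(X=i)=p_{0i}^{(n-3)}$ for $i\in\{0,1,2\}$ and $\Prb(X=3)=1-\sum_{i=0}^{2}p_{0i}^{(n-3)}$.

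Finally I would condition on $X$. If $X=3$ the triplet $T$ is already a triangle, contributing the term $1-\sum_{i=0}^{2}p_{0i}^{(n-3)}$. If $X=i$ with $i\le 2$, then exactly $3-i$ pairs of $T$ are still uncovered, and each such pair, say $\{a,b\}$, must be covered by its own family of $\binom{n-3}{2}$ hyperedges $\{a,b,x,y\}$; since each of these covers $\{a,b\}$ independently with probability $p/6$, the pair is covered with probability $\Prb\big(\Bin(\binom{n-3}{2},p/6)\ge 1\big)$, and the three such events (for the three pairs) are independent because the underlying hyperedge families are disjoint and independent of the degree-three family. Therefore $\Prb(T\text{ is a triangle}\mid X=i)=\big(\Prb(\Bin(\binom{n-3}{2},p/6)\ge 1)\big)^{3-i}$ for $i\le 2$, and summing $\Prb(X=i)$ against these conditional probabilities and multiplying by $\binom{n}{3}$ yields the stated formula. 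The only place demanding care is the case analysis in the first step together with the verification that the degree-three family really is a Markov chain with the displayed transitions and that its contribution is independent of the degree-two families; once that bookkeeping is set up, the computation is routine.
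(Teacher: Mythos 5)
Your proposal is correct and follows essentially the same route as the paper's proof: linearity of expectation, then for a fixed $T$ condition on the outcome of the $n{-}3$ hyperedges containing $T$ (modelled as the absorbing Markov chain $P$ on $\{0,1,2,3\}$ tracking how many pairs of $T$ are already covered), and finish with the independent $\Bin(\binom{n-3}{2},p/6)$ contributions from hyperedges meeting $T$ in exactly two vertices. The case split by $|H\cap T|$ and the explicit independence remarks you add are just a more spelled-out version of the same bookkeeping.
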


\subsection{Organisation} 

The remaining part of our article is organised as follows. 
In Section~\ref{sec:mixture} we show that a 
$\mG(n,p\, ; \,\mH)$ model is a mixture of models of the form $\mG(n,p\, ; \,\mD)$, where each  $\mD$ is a particular $2$-uniform hypergraph on $[n]$.  In Section~\ref{sec:degree} 
we collect some observations related to the degree distribution of vertices in various binomial random multigraph models. In Section~\ref{sec:monotone} we prove Theorems~\ref{monot:1} and~\ref{monot:2}.  
The proof of Theorem~\ref{gap_threshold} is given in Section~\ref{sec:proof_B_T}, 
and Theorems~\ref{thm:simple0} and~\ref{thm:connected} are proven in Section~\ref{sec:thresholds}. 
Finally, the proofs of Theorems~\ref{thm:triangle_rand}, ~\ref{thm:triangle_rand2} and~\ref{thm:triangle_four} are given in Section~\ref{sec:triangles}.

\section{The model as a mixture of random multigraphs}\label{sec:mixture}

In this section we show that the model 
$\mG(n,p\, ; \,\mH)$ is a mixture of models of the form $\mG(n,p\, ; \,\mD)$, for a particular class of $2$-uniform hypergraphs $\mD$. 
This requires some extra piece of notation.

\begin{definition}[Shadow]
\label{shadow}
Let $\mH=\{H_1,\ldots,H_m\}$ be a hypergraph on $[n]$. 
Let $\mathscr{S}(\mH)$ denote the class  consisting of all $2$-uniform hypergraphs $\mD=\{D_1,\ldots,D_m \}$, having $m$ edges, which satisfy  
$D_i \in \binom{H_i}{2}$, for all $i=1,\ldots,m$. Clearly, if $\mH$ is $2$-uniform, then $\mathscr{S}(\mH) = \{\mH\}$. 
We refer to the class  
$\mathscr{S}(\mH)$ as the \emph{shadow} of the hypergraph $\mH$. 
\end{definition}

Observe that each $\mD\in\mathscr{S}(\mH)$ is a $2$-uniform hypergraph consisting of $|\mH|$ doubletons. 

\begin{theorem}\label{mixture}
Fix a positive integer $n$, a real number $p\in [0,1]$ and a hypergraph $\mH$ on $[n]$. Let $\mP$ be a graph property. 
Then 
\[
\Prb(\mG(n,p\, ; \,\mH) \in \mP) =\prod_{H\in \mH} \frac{1}{\binom{|H|}{2}}\cdot \sum_{\mD\in \mathscr{S}(\mH)} \Prb\left(\mG(n,p\, ; \, \mD) \in \mP \right) \, ,
\]
where $\mathscr{S}(\mH)$ is the shadow of $\mH$. 
\end{theorem}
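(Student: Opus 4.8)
The plan is to condition on the outcome of the doubleton-selection step, which is the only source of randomness in $\mG(n,p\,;\,\mH)$ besides the independent $p$-coin flips. Enumerate $\mH = \{H_1,\dots,H_m\}$. For each $H_i$ the model chooses a doubleton $D_i \in \binom{H_i}{2}$ uniformly at random and independently of the other choices. Thus the random vector $(D_1,\dots,D_m)$ takes values in $\binom{H_1}{2}\times\cdots\times\binom{H_m}{2}$, which is exactly the set indexing $\mathscr{S}(\mH)$: a choice of $(D_1,\dots,D_m)$ is the same thing as a choice of an element $\mD \in \mathscr{S}(\mH)$ (as an ordered list of doubletons). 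By independence and uniformity, each particular $\mD$ is selected with probability $\prod_{i=1}^m \frac{1}{\binom{|H_i|}{2}} = \prod_{H\in\mH}\frac{1}{\binom{|H|}{2}}$, which does not depend on $\mD$.

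Next I would observe that, conditioned on the event that the selected doubletons are exactly those recorded by a fixed $\mD \in \mathscr{S}(\mH)$, the remaining construction of $\mG(n,p\,;\,\mH)$ — introduce an edge on each selected doubleton independently with probability $p$ — is by definition precisely the construction of $\mG(n,p\,;\,\mD)$ (here $\mD$ is a $2$-uniform hypergraph with $m$ edges, possibly with repetitions, and on a $2$-uniform hypergraph the doubleton-selection step is trivial since $\binom{D_i}{2}=\{D_i\}$). Hence
\[
\Prb\bigl(\mG(n,p\,;\,\mH)\in\mP \bigm| (D_1,\dots,D_m)=\mD\bigr) = \Prb\bigl(\mG(n,p\,;\,\mD)\in\mP\bigr).
\]
The law of total probability over the (finitely many) values of $(D_1,\dots,D_m)$ then gives
\[
\Prb(\mG(n,p\,;\,\mH)\in\mP) = \sum_{\mD\in\mathscr{S}(\mH)} \Bigl(\prod_{H\in\mH}\tfrac{1}{\binom{|H|}{2}}\Bigr)\,\Prb\bigl(\mG(n,p\,;\,\mD)\in\mP\bigr),
\]
and pulling the constant factor out of the sum yields the claimed identity.

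The only genuine subtlety — and the step I would be most careful about — is bookkeeping with multiplicities. Since $\mH$ is a multi-hypergraph, two indices $i\ne j$ may have $H_i=H_j$, and the elements of $\mathscr{S}(\mH)$ are defined as lists $(D_1,\dots,D_m)$, so distinct index-tuples giving rise to the "same" unordered multiset of doubletons must still be counted separately; the indexing of $\mathscr{S}(\mH)$ by tuples in $\prod_i \binom{H_i}{2}$ (rather than by unordered objects) is exactly what makes the uniform measure on doubleton-choices push forward to the uniform measure on $\mathscr{S}(\mH)$, and it is what makes $\mG(n,p\,;\,\mD)$ well-defined as a multigraph model. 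As long as one keeps $\mathscr{S}(\mH)$ indexed by tuples consistently with Definition~\ref{shadow}, no overcounting or undercounting occurs and the proof is a one-line application of total probability. I would state this multiplicity remark explicitly so the reader is not tempted to collapse $\mathscr{S}(\mH)$ to a set of distinct hypergraphs.
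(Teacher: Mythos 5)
Your argument is correct and coincides with the paper's own proof: both condition on the randomly selected doubleton for each $H\in\mH$, identify the conditional model with $\mG(n,p\,;\,\mD)$, and apply the law of total probability over $\mD\in\mathscr{S}(\mH)$. Your added remark on multiplicities is a reasonable clarification but not a genuinely different route.
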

\begin{proof}
The multigraph $\mG(n,p\, ; \,\mH)$ can be equivalently generated as follows: 
We first choose, independently from previous choices, a doubleton $D\in\binom{H}{2}$ uniformly at random, for each $H\in \mH$. 
This results in a collection consisting of $|\mH|$ doubletons, say $D_1,\ldots,D_{|\mH|}$, which form a $2$-uniform hypergraph, say  $\mathcal{D}\in\mathscr{S}(\mH)$. 
Then, given $\mD\in\mathscr{S}(\mH)$,  we flip a coin for every $D\in\mD$ to decide whether we put an edge between the vertices of  $D$, with each coin-flip being independent from all others. 
Hence, given $\mathcal{D}\in\mathscr{S}(\mH)$, 
the probability that $\mG(n,p\, ; \,\mH)\in\mP$ is equal to the probability that  $\mG(n,p\, ; \,\mD)\in\mP$. 
Notice that the probability of choosing a particular $\mathcal{D}\in\mathscr{S}(\mH)$ is equal to $\prod_{H\in \mH}\, \frac{1}{\binom{|H|}{2}}$. 
The result follows from the law of total probability, 
upon conditioning on $\mD\in\mathscr{S}(\mH)$. 
\end{proof}

\section{Degree distribution}\label{sec:degree}

In this section we collect some observations 
regarding the degree distribution of a vertex, and the number of edges between a pair of vertices in  binomial random multigraphs. 

\begin{proposition}\label{prop:1}
Let $G\in \mG(n,p\, ; \,\mH)$ and fix a vertex $v\in [n]$. 
Let $\mH_v=\{H\in \mH : v\in H\}$ and, for each $H\in\mH_v$, set $p_H = \frac{2p}{|H|}$. 
Then $\deg_G(v) \sim \mB(p_H ; H\in \mH_v)$. 
\end{proposition}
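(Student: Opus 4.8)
The plan is to express $\deg_G(v)$ (counted with multiplicity, as is standard in a multigraph) as a sum of indicator random variables indexed by the hyperedges of $\mH$, and then to verify that these indicators form a family of mutually independent Bernoulli variables with the stated parameters, so that the conclusion is immediate from the very definition of the Poisson--Binomial distribution $\mB(p_s;s\in S)$.

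First I would recall that, by the construction of $\mG(n,p\,;\,\mH)$, each hyperedge $H\in\mH$ produces \emph{at most one} edge of $G$: a doubleton $D_H\in\binom{H}{2}$ is drawn uniformly at random, and then an edge on $D_H$ is inserted with probability $p$, all these draws and coin flips being mutually independent. For $H\in\mH$ let $X_H$ be the indicator of the event ``$v\in D_H$ and the coin flip on $D_H$ is a success'', i.e.\ $X_H=1$ exactly when the (unique) potential edge contributed by $H$ is present and incident to $v$. Since distinct edges of $G$ arise from distinct hyperedges, counting edge-incidences at $v$ with multiplicity gives $\deg_G(v)=\sum_{H\in\mH}X_H$.

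Next I would discard the irrelevant terms: if $v\notin H$ then no doubleton of $\binom{H}{2}$ contains $v$, so $X_H=0$ almost surely, leaving $\deg_G(v)=\sum_{H\in\mH_v}X_H$. For $H\in\mH_v$, exactly $|H|-1$ of the $\binom{|H|}{2}$ doubletons of $H$ contain $v$, so $\Prb(v\in D_H)=\frac{|H|-1}{\binom{|H|}{2}}=\frac{2}{|H|}$; as the coin flip on $D_H$ is independent of the choice of $D_H$ and succeeds with probability $p$, we get $\Prb(X_H=1)=\frac{2p}{|H|}=p_H$. Finally, the independence built into the model — the $\{D_H\}_{H\in\mH}$ are mutually independent and the coin flips are mutually independent of each other and of the doubleton choices — shows that $\{X_H\}_{H\in\mH_v}$ is a family of mutually independent Bernoulli($p_H$) variables, whence $\deg_G(v)\sim\mB(p_H;H\in\mH_v)$.

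There is no substantive obstacle here; the only points needing a little care are the bookkeeping remark that each hyperedge contributes at most one edge (so that $\deg_G(v)$, with multiplicities, is genuinely a sum of hyperedge-indexed indicators), and the elementary identity $\frac{|H|-1}{\binom{|H|}{2}}=\frac{2}{|H|}$.
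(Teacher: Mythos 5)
Your proof is correct and follows essentially the same route as the paper's: you decompose $\deg_G(v)$ into hyperedge-indexed indicators, compute $\Prb(X_H=1)=\frac{|H|-1}{\binom{|H|}{2}}\cdot p=\frac{2p}{|H|}$, and invoke the mutual independence built into the model. Your version is just a slightly more formal write-up of the paper's argument.
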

\begin{proof}
Observe that vertex $v$ has a potential edge for every $H\in \mH_v$. 
Now, given $H\in\mH_v$, the probability of choosing a doubleton $D\in\binom{H}{2}$ such that $i\in D$ is equal to 
$\frac{|H|-1}{\binom{|H|}{2}}$. Hence the probability that vertex $v$ is adjacent to some vertex  
$w\in H$, is equal to $\frac{|H|-1}{\binom{|H|}{2}}\cdot p$. 
In other words, the number of edges adjacent to 
vertex $v$ equals the number of successes 
in $|\mH_v|$ independent trials, where  trial $H\in \mH_v$ has probability 
$\frac{|H|-1}{\binom{|H|}{2}}\cdot p = \frac{2p}{|H|}$ of being a success. The result follows. 
\end{proof}

\begin{corollary}
\label{cor:1}
Fix a positive integers $n\ge k\ge 2$, a real number $p\in [0,1]$, and let 
$\mH$ be a $k$-uniform hypergraph on $[n]$. Let $G\in \mG(n,p\, ; \,\mH)$ and fix a vertex  $v\in [n]$. 
Let $\mH_v=\{H\in \mH : v\in H\}$. 
Then 
$\deg_G(v) \sim \Bin(|\mH_v|, \frac{2p}{k})$. 
\end{corollary}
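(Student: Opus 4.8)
The plan is to obtain this as an immediate specialisation of Proposition~\ref{prop:1}. First I would invoke that proposition with the given hypergraph $\mH$ and vertex $v$, which yields $\deg_G(v) \sim \mB(p_H ; H \in \mH_v)$ with $p_H = \frac{2p}{|H|}$. The only extra observation needed is that $k$-uniformity of $\mH$ forces $|H| = k$ for every $H \in \mH$, and in particular for every $H \in \mH_v$; hence the success probabilities $p_H$ are all equal to the single value $\frac{2p}{k}$, independent of $H$.

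It then remains to note that a Poisson--Binomial random variable $\mB(p_s ; s \in S)$ in which every $p_s$ equals a common value $q$ is, by the very definition recalled in the introduction, the number of successes in $|S|$ independent trials each having success probability $q$, i.e.\ it is distributed as $\Bin(|S|, q)$. Applying this with $S = \mH_v$ and $q = \frac{2p}{k}$ gives $\deg_G(v) \sim \Bin(|\mH_v|, \frac{2p}{k})$, which is the claim.

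I do not anticipate any genuine obstacle: the statement is a routine corollary, and the only point worth making explicit is the (trivial) fact that a Poisson--Binomial with identical parameters collapses to an ordinary binomial. As an alternative one could prove the corollary from scratch, mimicking the argument of Proposition~\ref{prop:1}: each $H \in \mH_v$ contributes exactly one potential edge at $v$, which is present precisely when the uniformly chosen doubleton $D \in \binom{H}{2}$ contains $v$ (an event of probability $\frac{k-1}{\binom{k}{2}} = \frac{2}{k}$) and the subsequent independent coin flip succeeds (probability $p$); these $|\mH_v|$ events are mutually independent, each of probability $\frac{2p}{k}$, so their total count is $\Bin(|\mH_v|, \frac{2p}{k})$. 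Routing through Proposition~\ref{prop:1} is the cleaner of the two, so that is the write-up I would present.
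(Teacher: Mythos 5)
Your proposal is correct and takes the same route as the paper, which simply states the corollary is immediate from Proposition~\ref{prop:1}; you have filled in the (trivial) details that $k$-uniformity makes every $p_H = \frac{2p}{k}$ and that a Poisson--Binomial with constant parameter reduces to a Binomial.
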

\begin{proof}
Immediate from Proposition~\ref{prop:1}. 
\end{proof}

In other words, the number of edges that are incident to each vertex is a binomial random variable whose number of successes depend on the number of $k$-sets that contain the vertex. 
Similar results holds true for the number of edges between a fixed pair of vertices.

\begin{proposition}\label{prop:2} 
Let $G\in \mG(n,p\, ; \,\mH)$ and fix two distinct vertices $u,v\in [n]$. Let $\mH_{u,v}=\{H\in \mH : \{u,v\}\subset H\}$. 
Then the number of edges between vertices $u$ and $v$ is a $\mB(p_H ; H\in \mH_{u,v})$ random variable, where 
$p_H = \frac{p}{\binom{|H|}{2}}$. 
\end{proposition}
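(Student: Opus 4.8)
The plan is to mirror the proof of Proposition~\ref{prop:1}, but now tracking the doubleton chosen for each hyperedge that contains \emph{both} $u$ and $v$. First I would observe that an edge between $u$ and $v$ can only possibly be created when we process a hyperedge $H$ with $\{u,v\}\subset H$, i.e.\ an element of $\mH_{u,v}$; every other $H\in\mH$ contributes nothing to the multiplicity $E_{\{u,v\}}$. Thus it suffices to analyse, independently over $H\in\mH_{u,v}$, whether the $H$-th step places an edge between $u$ and $v$.

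Next I would compute the success probability for a single $H\in\mH_{u,v}$. The doubleton $D$ is chosen uniformly from $\binom{H}{2}$, which has $\binom{|H|}{2}$ elements, and exactly one of them is $\{u,v\}$; conditioned on $D=\{u,v\}$ the coin lands heads with probability $p$, and these two sources of randomness are independent. Hence the $H$-th step creates a $u$--$v$ edge with probability $\frac{1}{\binom{|H|}{2}}\cdot p = \frac{p}{\binom{|H|}{2}} =: p_H$, and it does so independently of the outcomes for all other hyperedges, since the doubleton choices and the coin flips are mutually independent across $\mH$. Therefore the number of $u$--$v$ edges is the number of successes in $|\mH_{u,v}|$ independent trials with respective success probabilities $p_H$, $H\in\mH_{u,v}$, which is exactly the definition of a $\mB(p_H;H\in\mH_{u,v})$ random variable.

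I do not foresee a genuine obstacle here; the statement is essentially a bookkeeping exercise once one notes that distinct hyperedges act independently and that at most one of the $\binom{|H|}{2}$ doubletons of a given $H$ equals $\{u,v\}$. The only mild subtlety worth spelling out is that a single hyperedge $H$ can contribute at most one edge between $u$ and $v$ (because only one doubleton is selected per hyperedge), so the relevant per-trial random variable is genuinely Bernoulli and the aggregate is Poisson--Binomial rather than, say, a sum with larger increments. Alternatively, one could derive the statement directly from Theorem~\ref{mixture}: conditioned on the shadow $\mD\in\mathscr{S}(\mH)$, the number of $u$--$v$ edges is $\Bin(|\{D\in\mD:D=\{u,v\}\}|,p)$, and averaging over the uniform choice of $\mD$ reproduces the Poisson--Binomial law; but the direct argument above is cleaner and I would present that one.
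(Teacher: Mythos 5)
Your argument is correct and is essentially the paper's own proof, just spelled out in slightly more detail: identify that only hyperedges in $\mH_{u,v}$ can contribute, compute the per-trial success probability $\frac{1}{\binom{|H|}{2}}\cdot p$, and invoke independence across hyperedges to conclude the Poisson--Binomial law. The alternative route through Theorem~\ref{mixture} that you sketch at the end is also valid, but as you note, the direct argument is cleaner and matches what the paper does.
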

\begin{proof}
For every $H\in \mH_{u,v}$, the probability of choosing the doubleton $\{u,v\}$ is equal to 
$\frac{1}{\binom{|H|}{2}}$. Therefore, the probability that vertex $u$ 
is adjacent to vertex $v$ in trial $H\in \mH_{u,v}$ is equal 
to $\frac{1}{\binom{|H|}{2}}\cdot p$. The result follows. 
\end{proof}

\begin{corollary}
\label{cor:2}
Fix a positive integers $n\ge k\ge 2$, a real number $p\in [0,1]$, and let 
$\mH$ be a $k$-uniform hypergraph on $[n]$. Let $G\in \mG(n,p\, ; \,\mH)$ and fix two distinct vertices $i,j\in [n]$. 
Let $\mH_{i,j}=\{H\in \mH : \{i,j\}\subset H\}$. 
Then the number of edges between the vertices $i$ and $j$ is a
$\Bin\left(|\mH_{i,j}|\,,\, \frac{p}{\binom{k}{2} }\right)$ random variable. 
\end{corollary}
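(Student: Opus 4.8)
The plan is to derive Corollary~\ref{cor:2} as an immediate specialisation of Proposition~\ref{prop:2}. Recall that Proposition~\ref{prop:2} tells us that, for fixed distinct vertices $i,j\in[n]$, the number of edges between $i$ and $j$ in $\mG(n,p\,;\,\mH)$ is distributed as $\mB(p_H\,;\,H\in\mH_{i,j})$, where $p_H = \frac{p}{\binom{|H|}{2}}$ and $\mH_{i,j} = \{H\in\mH : \{i,j\}\subset H\}$.

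First I would invoke the hypothesis that $\mH$ is $k$-uniform, so that every $H\in\mH$ satisfies $|H|=k$; in particular every $H\in\mH_{i,j}$ has $|H|=k$, and hence $p_H = \frac{p}{\binom{k}{2}}$ for all $H\in\mH_{i,j}$. A Poisson--Binomial distribution $\mB(p_H\,;\,H\in\mH_{i,j})$ in which all the success probabilities $p_H$ are equal to a common value $q = \frac{p}{\binom{k}{2}}$ is, by definition, simply the binomial random variable $\Bin(|\mH_{i,j}|, q)$ counting successes in $|\mH_{i,j}|$ independent identically-distributed trials. Substituting $q = \frac{p}{\binom{k}{2}}$ then yields exactly $\Bin\!\left(|\mH_{i,j}|, \frac{p}{\binom{k}{2}}\right)$, which is the claimed distribution.

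There is essentially no obstacle here: the only thing to check is the elementary fact that a Poisson--Binomial distribution with constant success probability collapses to an ordinary binomial, which is immediate from the definitions given in the notation section. One might add the one-line remark that $\binom{k}{2} = \binom{|H|}{2}$ is well-defined since $k\ge 2$ is assumed, so the success probability $\frac{p}{\binom{k}{2}}$ lies in $[0,1]$ whenever $p\in[0,1]$. Thus the proof is a direct appeal to Proposition~\ref{prop:2}, exactly parallel to how Corollary~\ref{cor:1} is obtained from Proposition~\ref{prop:1}.
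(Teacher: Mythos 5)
Your proof is correct and takes exactly the same route as the paper: the paper's entire proof is ``Immediate from Proposition~\ref{prop:2}'', and you simply spell out the obvious step that $k$-uniformity makes all the success probabilities $p_H = \frac{p}{\binom{k}{2}}$ equal, collapsing the Poisson--Binomial to a binomial.
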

\begin{proof}
Immediate from Proposition~\ref{prop:2}. 
\end{proof}

\begin{proposition}
Fix positive integers $n\ge k\ge 2$ and real numbers $p,q\in [0,1]$. 
Let $v$ be a vertex in $G\in \mG(n,p\,;\;\mbH_{n,q;k})$. 
Then $\deg_G(v)\sim \Bin\left(\binom{n-1}{k-1}, \frac{2p}{k} q\right)$.
\end{proposition}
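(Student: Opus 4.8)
The plan is to write $\deg_G(v)$ as a sum of $\binom{n-1}{k-1}$ independent Bernoulli random variables, each with success probability $\frac{2pq}{k}$, and then read off the binomial law. There are exactly $\binom{n-1}{k-1}$ sets $F\in\binom{[n]}{k}$ with $v\in F$. For each such $F$ I would let $\xi_F$ be the indicator of the event that $F$ is included in $\mbH_{n,q;k}$, that the doubleton drawn from $F$ contains $v$, and that the corresponding probability-$p$ coin flip is heads. It is convenient to couple things so that a doubleton is drawn from $\binom{F}{2}$ and an edge-coin is flipped for \emph{every} $F\in\binom{[n]}{k}$, not merely for those landing in the hypergraph; this does not alter the law of $G$. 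Since each hyperedge produces at most one graph edge, and that edge is incident to $v$ precisely when the drawn doubleton contains $v$, we get $\deg_G(v)=\sum_{F\ni v}\xi_F$.

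First I would compute $\Prb(\xi_F=1)$: the three defining events are independent, with probabilities $q$, $\frac{|F|-1}{\binom{|F|}{2}}=\frac{2}{k}$, and $p$ respectively, so $\Prb(\xi_F=1)=\frac{2pq}{k}$. Next I would check that $\{\xi_F\}_{F\ni v}$ is a mutually independent family: it is assembled from three mutually independent layers of randomness --- the inclusion indicators of $\mbH_{n,q;k}$, the doubleton draws, and the edge-coins --- and within each layer the choices attached to distinct sets $F$ are independent of one another; since $\xi_F$ depends only on the three coordinates indexed by $F$, mutual independence follows. Hence $\deg_G(v)\sim\Bin\left(\binom{n-1}{k-1},\frac{2pq}{k}\right)$, as claimed.

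An alternative route is by conditioning and ``binomial thinning'': conditioning on $\mH=\mbH_{n,q;k}$, Corollary~\ref{cor:1} gives $\deg_G(v)\mid\mH\sim\Bin\left(|\mH_v|,\frac{2p}{k}\right)$, while $|\mH_v|\sim\Bin\left(\binom{n-1}{k-1},q\right)$ because each of the $\binom{n-1}{k-1}$ sets through $v$ is included independently with probability $q$; one then invokes the elementary fact that a $\Bin(M,r)$ variable with random parameter $M\sim\Bin(N,q)$ has the unconditional law $\Bin(N,qr)$ (immediate from the ``each of $N$ items survives two independent stages with probabilities $q$ and $r$'' picture, or from a probability generating function computation). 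I do not anticipate a genuine obstacle; the only slightly delicate point is the bookkeeping behind the mutual independence of the $\xi_F$, equivalently a clean statement and proof of the thinning identity, both of which are routine.
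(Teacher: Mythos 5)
Your primary argument (the direct decomposition $\deg_G(v)=\sum_{F\ni v}\xi_F$) is correct but takes a genuinely different route from the paper; your ``alternative route'' is exactly what the paper does. The paper conditions on the realized hypergraph, lets $X=|\mH_v|\sim\Bin\bigl(\binom{n-1}{k-1},q\bigr)$ be the number of hyperedges through $v$, invokes Corollary~\ref{cor:1} to get $\deg_G(v)\mid X\sim\Bin(X,\tfrac{2p}{k})$, and then appeals (implicitly) to the thinning identity that a $\Bin(M,r)$ with $M\sim\Bin(N,q)$ is unconditionally $\Bin(N,qr)$. This is modular --- it reuses a result already proved --- but it quietly relies on that thinning fact, which the paper does not state or cite. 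Your direct decomposition avoids any appeal to Corollary~\ref{cor:1} or to thinning: you exhibit $\deg_G(v)$ outright as a sum of $\binom{n-1}{k-1}$ i.i.d.\ Bernoulli$(\tfrac{2pq}{k})$ variables, with the coupling (drawing a doubleton and flipping an edge-coin for \emph{every} $k$-set, used or not) supplying the independence cleanly; the only point requiring care, which you correctly flag, is that $\xi_F$ depends only on the three coordinates of randomness attached to $F$, so mutual independence is inherited from the product structure. In short, the paper's argument is shorter given its earlier machinery, while yours is self-contained and makes the binomial law transparent rather than inferred; both are sound.
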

\begin{proof}
Let $X$ be the number of edges in $\mbH_{n,q;k}$ that contain vertex $v$. 
Then $X\sim \Bin\left(\binom{n-1}{k-1}, q\right)$. 
Now, given $X$, it follows from Corollary~\ref{cor:1} that $\deg_G(v)\sim \Bin(X, \frac{2p}{k})$, and the result follows. 
\end{proof}

In the case of $\mG(n,p\,;\,\mbH_{n,m;k})$, the degree of a vertex is a mixture of binomial random variables. 
Recall that $\Hyp(N,M,a)$ denotes a hypergeometric random variable. 

\begin{proposition}
Fix positive integers $n\ge k\ge 2$, a real number $p\in [0,1]$ and a positive integer $m\le \binom{n}{k}$. 
Let $v$ be a vertex in $G\in \mG(n,p,\mbH_{n,m;k})$. 
Then $\deg_G(v)\sim \Bin\left(W, p \right)$, where $W\sim\Hyp\left(\binom{n}{k},\binom{n-1}{k-1},m \right)$. 
\end{proposition}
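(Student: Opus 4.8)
The plan is to mimic the proof of the preceding proposition (for $\mbH_{n,q;k}$) but replace the binomial count of hyperedges through $v$ with the corresponding hypergeometric count. First I would let $X$ denote the number of hyperedges of $\mbH_{n,m;k}$ that contain the fixed vertex $v$. Since $\mbH_{n,m;k}$ is a uniformly random $m$-element subset of $\binom{[n]}{k}$, and exactly $\binom{n-1}{k-1}$ of the $\binom{n}{k}$ possible $k$-sets contain $v$, the count $X$ is precisely the number of ``successes'' in a sample of size $m$ drawn without replacement from a population of size $\binom{n}{k}$ containing $\binom{n-1}{k-1}$ successes; that is, $X\sim\Hyp\!\left(\binom{n}{k},\binom{n-1}{k-1},m\right)$, which is the claimed distribution of $W$.

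Next I would condition on the value of $X$. Conditioned on the event that exactly $X$ of the hyperedges pass through $v$, the hypergraph $\mbH_{n,m;k}$ restricted to those edges is a $k$-uniform hypergraph $\mH$ with $|\mH_v|=X$, and Corollary~\ref{cor:1} gives that $\deg_G(v)\sim\Bin\!\left(X,\frac{2p}{k}\right)$ in $\mG(n,p\,;\,\mH)$. Here I would note that the coin-flips defining the edges of $\mG$ are independent of the random choice of $\mbH_{n,m;k}$, so this conditional statement is legitimate and does not depend on which particular $X$ hyperedges through $v$ were selected. Combining the two steps, $\deg_G(v)$ is a mixture of binomials $\Bin(X,\tfrac{2p}{k})$ with the hypergeometric mixing distribution for $X$, which is exactly the assertion $\deg_G(v)\sim\Bin(W,p)$ once one observes the statement should read $\Bin\!\left(W,\tfrac{2p}{k}\right)$ — I would flag that the bare $p$ in the displayed claim appears to be a typo inherited from the $q$-version, or alternatively absorb the factor by redefining, and state the result consistently with Corollary~\ref{cor:1}.

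There is essentially no analytic obstacle here; the only thing requiring care is the bookkeeping that makes the ``condition, then un-condition'' argument rigorous. Concretely, I would make sure to spell out that the pair $(\mbH_{n,m;k}, \text{coin-flips})$ can be generated as a two-stage experiment, that the conditional law of $\mbH_{n,m;k}$ given $X=x$ is exchangeable over the $\binom{\binom{n-1}{k-1}}{x}\binom{\binom{n}{k}-\binom{n-1}{k-1}}{m-x}$ admissible hypergraphs, and that for every such hypergraph the conditional law of $\deg_G(v)$ is the same $\Bin(x,\tfrac{2p}{k})$ by Corollary~\ref{cor:1}. The identification of $X$ as hypergeometric is the one genuinely combinatorial step, and it is immediate from the definition of $\mbH_{n,m;k}$ as a uniform $m$-subset of $\binom{[n]}{k}$. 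So the main ``obstacle'' is purely expository: phrasing the mixture cleanly and reconciling the constant in the final binomial with the one in Corollary~\ref{cor:1}.
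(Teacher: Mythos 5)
Your proof is correct and takes essentially the same route as the paper: identify the number $W$ of $k$-sets through $v$ as hypergeometric (a sample of $m$ from $\binom{n}{k}$ with $\binom{n-1}{k-1}$ ``successes''), then condition on $W$ and apply Corollary~\ref{cor:1} to obtain $\Bin(W,\tfrac{2p}{k})$, and un-condition. You are also right to flag the constant: the statement as printed says $\Bin(W,p)$, but the paper's own proof derives $\Bin(W,\tfrac{2p}{k})$, so the bare $p$ is a typo in the proposition (the same slip does not occur in the preceding $\mbH_{n,q;k}$ proposition, whose statement correctly reads $\Bin\bigl(\binom{n-1}{k-1},\tfrac{2p}{k}q\bigr)$). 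Your extra remarks on exchangeability and the two-stage generation of $(\mbH_{n,m;k},\text{coin-flips})$ just make explicit what the paper leaves implicit.
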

\begin{proof}
Let $W$ be the number of edges in $\mbH_{n,m;k}$ that contain vertex $v$. 
Note that is holds  $W\sim\Hyp\left(\binom{n}{k},\binom{n-1}{k-1},m \right)$. 
Now, given $W$, it follows from Corollary~\ref{cor:1} that $\deg_G(v)\sim \Bin(W, \frac{2p}{k})$.
In other words, we have 
\begin{eqnarray*}
\Prb(\deg_G(v)= \ell) &=& \sum_{j\ge \ell} \Prb(\deg_G(v)=\ell\,|\, W=j)\cdot \Prb(W=j) \\
                                   &=& \sum_{j\ge \ell} \Prb\left(\Bin(\ell, \frac{2p}{k}) = j\right) \cdot \Prb(W=j) 
\end{eqnarray*}
and the result follows. 
\end{proof}

\section{Monotonicity}\label{sec:monotone}

In this section we prove Theorems~\ref{monot:1} and~\ref{monot:2}. 

\begin{proof}[Proof of Theorem~\ref{monot:1}]
We may assume that $p_1<p_2$; say, $p_2 = p_1 + (1-p_1)\cdot p$, for some $p\in (0,1)$. 
Begin with generating a graph $G\in\mG(n,p_1\, ; \,\mH)$. 
That is, choose, independently from previous choices, a doubleton $D\in \binom{H}{2}$ uniformly at random from every $H\in\mH$. 
This results in $|\mH|$ doubletons, say $D_1,\ldots,D_{|\mH|}$, and for every such doubleton 
we, independently, flip a coin to decide whether we introduce an edge between the 
corresponding vertices in the graph; where the outcome of each coin is a success with probability $p_1$. 
A doubleton $D\in \{D_1,\ldots,D_{|\mH|}\}$ is said to be \emph{bad} if 
the outcome of the corresponding coin-flip was a failure; this happens with probability $1-p_1$. 
Now, for every bad doubleton from $\{D_1,\ldots,D_{|\mH|}\}$, 
we independently introduce an edge between the corresponding vertices 
in the graph $G$ with probability $p$. Let $G^{\prime}$ be the resulting graph. 
Clearly, it holds $G\subset G^{\prime}$ and 
$G^{\prime} \in \mG(n,p_2\, ; \,\mH)$. If $G\in\mathcal{P}$, 
then it also holds $G^{\prime}\in\mathcal{P}$, and the result follows. 
\end{proof}

\begin{proof}[Proof of Theorem~\ref{monot:2}]
The proof is similar to the proof of Theorem~\ref{monot:1}; so we briefly sketch it. 
Begin with generating a graph $G\in\mG(n,p\, ; \,\mH_1)$. Let $\mH = \mH_2\setminus \mH_1$. 
For every $H\in\mH$ choose a doubleton, say $D=\{i,j\}\in\binom{H}{2}$ uniformly at random, 
and then introduce an edge between the vertices $i$ and $j$ in $G$ with probability $p$. Let 
$G^{\prime}$ denote the resulting graph. Clearly, it holds $G\subset G^{\prime}$ and 
$G^{\prime}\in \mG(n,p\, ; \,\mH_2)$. 
\end{proof}

\section{Proof of Theorem~\ref{gap_threshold}}
\label{sec:proof_B_T}

In this section we prove Theorem~\ref{gap_threshold}. 
Let us begin with determining the threshold of the simplest possible increasing graph property. 

\begin{lemma}\label{all_edges}
The property ``the graph contains at least one edge" has threshold $\frac{1}{|\mH_n|}$ in $\mG(n,p\, ; \,\mH_n)$. 
\end{lemma}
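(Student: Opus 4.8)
The plan is to show that $\frac{1}{|\mH_n|}$ is a threshold for the property $\mP = $ ``contains at least one edge'' by a direct first- and second-moment (here, a direct union bound and a direct product) computation on the number of edges. Let $N = N(G)$ denote the total number of edges in $G \in \mG(n,p_n\,;\,\mH_n)$. Each hyperedge $H \in \mH_n$ contributes an edge with probability exactly $p_n$, independently across the $|\mH_n|$ hyperedges (the choice of doubleton is irrelevant for whether \emph{some} edge is produced). Hence $N \sim \Bin(|\mH_n|, p_n)$, and $\mG(n,p_n\,;\,\mH_n) \in \mP$ iff $N \ge 1$.

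For the $0$-statement, assume $p_n \ll \frac{1}{|\mH_n|}$, i.e. $p_n |\mH_n| \to 0$. Then $\Exp[N] = p_n |\mH_n| \to 0$, so by Markov's inequality (first moment method)
\[
\Prb(\mG(n,p_n\,;\,\mH_n) \in \mP) = \Prb(N \ge 1) \le \Exp[N] = p_n |\mH_n| \to 0.
\]
For the $1$-statement, assume $p_n \gg \frac{1}{|\mH_n|}$, i.e. $p_n |\mH_n| \to \infty$. Using independence across hyperedges,
\[
\Prb(N = 0) = (1-p_n)^{|\mH_n|} \le e^{-p_n |\mH_n|} \to 0,
\]
so $\Prb(\mG(n,p_n\,;\,\mH_n) \in \mP) = \Prb(N \ge 1) \to 1$. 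This completes the argument.

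There is essentially no obstacle here: the key observation that trivializes everything is that $N$ is genuinely binomial (the ``dependencies'' that plague larger $k$ in later sections concern \emph{which} pair gets the edge, not \emph{whether} a hyperedge fires), so the usual $\mG(n,p)$-style threshold proof goes through verbatim. The only minor point to be careful about is reading the definition of threshold correctly — the roles of ``$\ll$'' and ``$\gg$'' and the monotone-increasing nature of $\mP$ — but no concentration inequality beyond Markov and the elementary bound $1-x \le e^{-x}$ is needed. One could optionally remark that $\frac{1}{|\mH_n|}$ is consistent with Theorem~\ref{monot:2}, since enlarging $\mH_n$ only makes an edge more likely, but this is not required for the proof.
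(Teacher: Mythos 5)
Your proof is correct and rests on the same key observation as the paper's, namely that the number of edges is exactly $\Bin(|\mH_n|, p_n)$ because each hyperedge independently contributes an edge with probability $p_n$, regardless of which doubleton was sampled. The only cosmetic difference is in the $0$-statement: you bound $\Prb(N\ge 1)$ by Markov's inequality, while the paper bounds $\Prb(N=0)=(1-p_n)^{|\mH_n|}$ from below using $1-x\ge e^{-2x}$; both are standard and equivalent in spirit.
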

\begin{proof}
Let $G_n\in \mG(n,p_n\, ; \, \mH_n)$, and observe that $G_n$ is empty if and only if all $|\mH_n|$ coin-tosses corresponding to doubletons chosen from elements of $\mH$  
were failures. In other words, it holds  
\[
\Prb(G_n \text{ is empty}) = \Prb(\Bin(|\mH_n|, p_n)=0) = (1-p_n)^{|\mH_n|} \, .
\] 
Suppose first that $p_n\ll \frac{1}{|\mH_n|}$. Then, using the inequality $1-x\ge e^{-2x}$, for $x\in[0,1/2]$, we obtain 
\[
\Prb(G_n \text{ is empty})=(1-p_n)^{|\mH_n|}  \ge e^{-2p_n|\mH_n|} \to 1 \, , \text{ as } n\to\infty \, .
\]
Assume now that $p_n\gg \frac{1}{|\mH_n|}$. Then, using the inequality $1-x\le e^{-x}$, for $x\in \mathbb{R}$, we obtain
\[
\Prb(G_n \text{ is empty})=(1-p_n)^{|\mH_n|}  \le e^{-p_n|\mH_n|} \to 0 \, , \text{ as } n\to\infty \, .
\]
Hence $\frac{1}{|\mH_n|}$ is the desired threshold. 
\end{proof}

We will also need the following lemma.

\begin{lemma}\label{lim_one}
Let $k\ge 3$ be fixed, and 
let $\{\mH_n\}_n$ be a sequence of $k$-uniform  hypergraphs on $[n]$  
such that $\deg_{\mH_n}(B)=\Theta(n^{k-2})$, for all $n$ and all $B\in\binom{[n]}{2}$. 
Let $\mP$ denote the monotone increasing property ``all pairs of vertices are joined by at least one edge". 
Then 
\[
\lim_{n\to +\infty} \, \Prb(\mG(n,1\, ; \,\mH_n) \in \mP) = 1 \, .
\]
\end{lemma}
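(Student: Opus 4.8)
The statement to prove is Lemma~\ref{lim_one}: if $\mH_n$ is $k$-uniform with $\deg_{\mH_n}(B) = \Theta(n^{k-2})$ for every pair $B$, then a.a.s.\ every pair of vertices in $\mG(n,1\,;\,\mH_n)$ is joined by at least one edge. My plan is a union bound over the $\binom{n}{2}$ pairs: I will show that for a fixed pair $B = \{u,v\}$, the probability that $B$ receives \emph{no} edge is $o(n^{-2})$, uniformly over $B$, and then sum.

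First I would fix $B = \{u,v\}$ and let $d = \deg_{\mH_n}(B) = \Theta(n^{k-2})$; write $\mH_{u,v} = \{H \in \mH_n : \{u,v\}\subset H\}$, so $|\mH_{u,v}| = d$. Since $p = 1$, an edge appears between $u$ and $v$ precisely when, for some $H \in \mH_{u,v}$, the doubleton chosen from $H$ is exactly $\{u,v\}$; by Corollary~\ref{cor:2} (with $p=1$) the number of $u$--$v$ edges is $\Bin\!\left(d, \frac{1}{\binom{k}{2}}\right)$. Hence
\[
\Prb(\text{no edge between } u \text{ and } v) = \left(1 - \frac{1}{\binom{k}{2}}\right)^{d}.
\]
Using $1-x \le e^{-x}$ this is at most $\exp\!\left(-d/\binom{k}{2}\right)$. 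Because $k$ is fixed and $d = \Theta(n^{k-2})$ with $k \ge 3$, we have $d/\binom{k}{2} = \Omega(n^{k-2}) = \Omega(n)$, so this probability is at most $e^{-cn}$ for some constant $c>0$ and all large $n$, which is certainly $o(n^{-2})$.

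Then a union bound over all $B \in \binom{[n]}{2}$ gives
\[
\Prb\!\left(\mG(n,1\,;\,\mH_n) \notin \mP\right) \le \binom{n}{2}\cdot e^{-cn} \to 0 \quad \text{as } n \to \infty,
\]
which is the claim. I do not anticipate a serious obstacle here: the $p=1$ assumption collapses the randomness to the doubleton choices, the events ``pick $\{u,v\}$ from $H$'' are independent across $H$, and the quantitative degree hypothesis makes each pair's failure probability exponentially small. The only points requiring a little care are (i) invoking Corollary~\ref{cor:2} correctly and noting the constant $\binom{k}{2}$ is fixed, and (ii) making sure the implied constant in $\deg_{\mH_n}(B) = \Theta(n^{k-2})$ is uniform in $B$ so that the per-pair bound $e^{-cn}$ holds with a single $c$ — which is exactly what the hypothesis ``for all $n$ and all $B$'' provides.
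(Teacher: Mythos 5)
Your proof is correct and is essentially the same as the paper's: fix a pair $B$, use Corollary~\ref{cor:2} to identify the number of $B$-edges as $\Bin\bigl(\deg_{\mH_n}(B),1/\binom{k}{2}\bigr)$, bound $\Prb(\text{no edge at }B)$ by $\exp\bigl(-\Omega(n^{k-2})\bigr)$, and finish by a union bound over $\binom{n}{2}$ pairs. The only cosmetic difference is that the paper writes the failure probability as $\Prb(Y_B=0)$ for an indicator count $Y=\sum_B Y_B$, but the underlying estimate is identical.
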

\begin{proof}
For every $B=\{i,j\}\in\binom{[n]}{2}$, let $Y_B$ denote the indicator of the event ``vertices $i$ and $j$ are joined by at least one edge", and set $Y= \sum_{B\in\binom{[n]}{2}}Y_B$. 
Then $Y$ counts the number of pairs of vertices from $[n]$ that are joined by at least one edge. Note note that  $Y< \binom{n}{2}$ implies that there exists $B\in\binom{[n]}{2}$ such that $Y_B=0$. 
Hence the union bound combined with Corollary~\ref{cor:2} imply that there exists constant $C$, which does not depend on $n$, such that  
\begin{eqnarray*}
\Prb\left(Y < \binom{n}{2}\right) &\le& \sum_{B\in\binom{[n]}{2}} \Prb(Y_B =0) \,
\le\, \binom{n}{2}\cdot \left( 1 - \frac{1}{\binom{k}{2}} \right)^{C\cdot n^{k-2} }\\
&\le& \binom{n}{2}\cdot e^{-\frac{C\cdot n^{k-2}}{\binom{k}{2}}} \, =\,  
e^{\log(\binom{n}{2})  -\frac{C\cdot n^{k-2}}{\binom{k}{2}}} \to 0 \, ,
\end{eqnarray*}
as desired. 
\end{proof}

\begin{corollary}\label{randd}
Let $\mD\in\mathscr{S}(\binom{[n]}{k})$ be chosen uniformly at random. 
Then $\Prb(\mD \, \text{is complete}) \to 1$, as $n\to\infty$. 
\end{corollary}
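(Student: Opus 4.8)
The plan is to recognise that a uniformly random element $\mD\in\mathscr{S}(\binom{[n]}{k})$ is precisely the collection of doubletons produced in the first stage of generating $\mG(n,1\, ; \,\binom{[n]}{k})$. Indeed, as noted in the proof of Theorem~\ref{mixture}, one generates $\mG(n,1\, ; \,\binom{[n]}{k})$ by first choosing, independently and uniformly, a doubleton $D_H\in\binom{H}{2}$ for every $H\in\binom{[n]}{k}$ --- which yields an element $\mD\in\mathscr{S}(\binom{[n]}{k})$ that is uniformly distributed --- and then flipping a coin with success probability $p=1$ for each chosen doubleton. Since all of these coin-flips succeed, $\mG(n,1\, ; \,\mD)$ is the deterministic multigraph whose edge multiset is exactly $\mD$; in particular it has the property $\mP=\,$``all pairs of vertices are joined by at least one edge'' if and only if $\mD$ is complete.

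First I would check the hypothesis of Lemma~\ref{lim_one} for $\mH_n=\binom{[n]}{k}$: for every $B\in\binom{[n]}{2}$ we have $\deg_{\binom{[n]}{k}}(B)=\binom{n-2}{k-2}=\Theta(n^{k-2})$, since $k\ge 3$ is fixed. Lemma~\ref{lim_one} then gives $\Prb\big(\mG(n,1\, ; \,\binom{[n]}{k})\in\mP\big)\to 1$. Combining this with the identification of the previous paragraph --- equivalently, reading off the right-hand side of Theorem~\ref{mixture} with $p=1$, where every summand $\Prb(\mG(n,1\, ; \,\mD)\in\mP)$ is the indicator of ``$\mD$ is complete'' and the prefactor $\prod_{H}\frac{1}{\binom{|H|}{2}}$ is exactly the uniform probability on $\mathscr{S}(\binom{[n]}{k})$ --- yields $\Prb(\mD\text{ is complete})\to 1$, which is the claim.

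Alternatively, one can bypass Lemma~\ref{lim_one} and argue directly: a fixed $B\in\binom{[n]}{2}$ fails to appear in $\mD$ exactly when $D_H\ne B$ for all $\binom{n-2}{k-2}$ sets $H\in\binom{[n]}{k}$ with $B\subset H$, an event of probability $\left(1-\frac{1}{\binom{k}{2}}\right)^{\binom{n-2}{k-2}}$ by independence of the choices; a union bound over the $\binom{n}{2}$ pairs, together with $\binom{n-2}{k-2}=\Theta(n^{k-2})\gg\log n$, shows $\Prb(\mD\text{ is not complete})\to 0$. There is no genuine obstacle here; the only step requiring care is the bookkeeping matching the uniform measure on the shadow $\mathscr{S}(\binom{[n]}{k})$ with the generative description of $\mG(n,1\, ; \,\binom{[n]}{k})$ from Theorem~\ref{mixture}, plus the observation that with $p=1$ the statement ``$\mG(n,1\, ; \,\mD)$ is complete'' coincides with ``$\mD$ is complete''.
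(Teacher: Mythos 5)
Your main argument coincides with the paper's proof: verify the degree hypothesis $\deg_{\binom{[n]}{k}}(B)=\binom{n-2}{k-2}=\Theta(n^{k-2})$ so Lemma~\ref{lim_one} applies, invoke Theorem~\ref{mixture} with $p=1$, and observe that each summand $\Prb(\mG(n,1\,;\,\mD)\in\mP)$ is precisely the indicator that $\mD$ is complete. Your alternative union-bound argument is also correct; it amounts to re-deriving the relevant special case of Lemma~\ref{lim_one} directly for the uniform choice of $\mD$, and is a valid self-contained substitute.
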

\begin{proof}
Set $\mH_n=\binom{[n]}{k}$ and note that $\deg_{\mH_n}(B) = \binom{n-2}{k-2}$ for all $B\in\binom{[n]}{2}$.  Let $\mP$ denote the monotone increasing property ``all pairs of vertices are joined by at least one edge". 
Lemma~\ref{lim_one} and Theorem~\ref{mixture} imply  that 
   \[
    1 = \lim_{n\to\infty}  \, \Prb(\mG(n,1\, ; \,\mH_n) \in \mP)  = \lim_{n\to\infty}  \frac{1}{\binom{k}{2}^{|\mH_n|}}\sum_{\mD\in \mathscr{S}(\binom{[n]}{k})} 
   \Prb(\mG(n,1\, ; \, \mD) \in \mP)  \, .
   \]
   Now observe that $\Prb(\mG(n,1\, ; \, \mD) \in \mP)$ is equal to either $1$ or $0$, depending on whether $\mD$ is complete or not.  
   Hence, if $\mathbf{1}_{\mD}$ is the indicator of the event "$\mD$ is complete", it holds 
   \[
   \lim_{n\to\infty}  \frac{1}{\binom{k}{2}^{|\mH_n|}}\sum_{\mD\in \mathscr{S}(\binom{[n]}{k})} 
   \mathbf{1}_{\mD}  \, =\, 1 \, ,
   \]
   as desired. 
\end{proof}

We may now proceed with the proof of Theorem~\ref{gap_threshold}.

\begin{proof}[Proof of Theorem~\ref{gap_threshold}]
To simplify notation, let us set $\mH_{n,k}=\binom{[n]}{k}$. Let $\mP$ be an increasing multigraph property 
which admits a threshold in $\mG(n,p)$. Let $\{c_n^{\ast}\}_n$ be a threshold for $\mP$ in $\mG(n,p\, ; \,\binom{[n]}{2})$, and let 
$G_n\in\mG(n,p_n\, ; \,\mH_n)$.

Set $b_n^{\ast} = \frac{1}{n^k}$, and suppose that $p_n \ll b_n^{\ast}$. 
Since $\Prb(G_n \text{ is empty})\le \Prb(G_n\notin \mP)$, 
Lemma~\ref{all_edges} implies $\Prb(G_n\notin \mP) \to 1$, as $n\to\infty$, and the first statement follows. 

To prove the second statement, suppose that $p_n \gg  c_n^{\ast}$. Theorem~\ref{mixture} then yields
\begin{eqnarray*}
 \Prb(G_n\in \mP) &=& \binom{k}{2}^{-|\mH_n|} \sum_{\mD\in \mathscr{S}(\mH_n ) } \Prb(\mG(n,p_n\, ; \,\mD)\in \mP)\\
 &\ge& \binom{k}{2}^{-|\mH_n|} \sum_{\mD\in \mathscr{S}(\mH_n ): \mD \text{ complete} } \Prb(\mG(n,p_n\, ; \,\mD)\in \mP) \, .
\end{eqnarray*}
For every complete $\mD\in \mathscr{S}(\mH_n)$ it holds $\Prb(\mG(n,p_n\, ; \,\mD)\in \mP) \ge \Prb(\mG(n,p_n\, ; \, \binom{[n]}{2}) \in\mP)$, by Theorem~\ref{monot:2}, and hence 
\[
 \Prb(G_n\in \mP) \ge \binom{k}{2}^{-|\mH_n|} \sum_{\mD\in \mathscr{S}(\mH_n ): \mD \text{ complete} } \Prb(\mG(n,p_n\, ; \,\binom{[n]}{2}) \in\mP)  = \pi_n\cdot \Prb(\mG(n,p_n\, ; \,\binom{[n]}{2}) \in\mP)  \, 
\]
where $\pi_n$ is the probability that a randomly selected $\mD\in \mathscr{S}(\mH_n )$ is complete.  
The second statement follows from Corollary~\ref{randd} and the fact that $\Prb(\mG(n,p_n\, ; \,\binom{[n]}{2})\in\mP)\to 1$, as $n\to\infty$.  
\end{proof}

\section{Thresholds}\label{sec:thresholds}

\subsection{Proof of Theorem~\ref{thm:simple0}}

Let us begin with the following observation, whose proof is included for the sake of completeness.  

\begin{lemma}\label{bin_ineq}
Let $n\ge k\ge 3$ be positive integers, and let $\varepsilon\in (0,1]$. Suppose that $X\sim \Bin(c\cdot n^{k-2},\frac{1}{n^{k-1-\varepsilon}})$, for some constant $c$ which does not depend on $n$.  
Then, for large enough $n$, it holds that 
\[
\Prb(X\ge 2) \,\ge\, \frac{1}{n^{2-\varepsilon}} \, .
\]
\end{lemma}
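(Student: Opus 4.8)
The plan is to estimate $\Prb(X \ge 2)$ from below by $\Prb(X = 2)$ and then to bound the binomial point probability directly. Write $N = c\cdot n^{k-2}$ and $q = n^{-(k-1-\eps)}$; note that $Nq = \Theta(n^{\eps-1}) \to 0$, so the distribution is concentrated near $0$ and $\Prb(X=2)$ is the dominant term among $\{\Prb(X=j)\}_{j\ge 2}$. We have
\[
\Prb(X=2) = \binom{N}{2} q^2 (1-q)^{N-2}.
\]
The first step is to observe $\binom{N}{2} q^2 = \frac{N(N-1)}{2}q^2 \sim \frac{c^2}{2}\, n^{2(k-2)}\cdot n^{-2(k-1-\eps)} = \frac{c^2}{2}\, n^{2\eps - 2}$, since $N(N-1) \ge \tfrac12 N^2$ for large $n$; so up to constants this factor is of order $n^{2\eps-2} = (n^{2-\eps})^{-1}\cdot n^{\eps}$, which is already a factor $n^{\eps}$ larger than the target $n^{-(2-\eps)}$. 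The second step is to control the tail factor $(1-q)^{N-2}$: using $1-q \ge e^{-2q}$ for $q \in [0,1/2]$ (valid for large $n$), we get $(1-q)^{N-2} \ge e^{-2q(N-2)} \ge e^{-2qN} = e^{-2c\, n^{\eps-1}} \to 1$. Hence for $n$ large this factor exceeds, say, $1/2$.

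Combining, for large enough $n$,
\[
\Prb(X \ge 2) \ge \Prb(X=2) \ge \frac14 \cdot \frac{c^2}{2}\, n^{2\eps-2} = \frac{c^2}{8}\, n^{\eps}\cdot \frac{1}{n^{2-\eps}} \ge \frac{1}{n^{2-\eps}},
\]
where the last inequality holds once $n^{\eps} \ge 8/c^2$, i.e. for all sufficiently large $n$ (here we use $\eps > 0$). This completes the proof.

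There is essentially no serious obstacle here; the only points requiring a little care are: making sure the $O(1)$ constants from $\binom{N}{2}q^2$ and from the tail bound $(1-q)^{N-2}$ are handled cleanly (both are bounded away from $0$ for large $n$), and using the hypothesis $\eps > 0$ at the very end so that the spare factor $n^{\eps}$ absorbs all constants. One should also double-check the edge case where $N$ is not an integer — but $c\cdot n^{k-2}$ should be read as (the nearest integer to) that quantity, and the asymptotics are unaffected. I would present the argument exactly in the three-step order above: lower bound by $\Prb(X=2)$; asymptotics of the binomial coefficient times $q^2$; lower bound on the survival factor; then assemble.
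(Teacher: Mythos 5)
Your plan is correct and is essentially the paper's: lower-bound $\Prb(X\ge 2)$ by $\Prb(X=2)$, note that $\binom{N}{2}q^2=\Theta(n^{2\varepsilon-2})$, control the survival factor $(1-q)^{N-2}$, and use the spare $n^{\varepsilon}$ (available precisely because $\varepsilon>0$) to absorb all constants. One line is stated too strongly: when $\varepsilon=1$ the exponent $qN=c\,n^{\varepsilon-1}$ equals the constant $c$, so $e^{-2qN}=e^{-2c}$ does \emph{not} tend to $1$ and can certainly be below $1/2$ (already $e^{-2}<1/2$ for $c=1$). This is harmless, because all you need, and all your final display actually uses, is that $(1-q)^{N-2}$ is bounded below by a positive constant depending only on $c$ — which $e^{-2c}$ provides — after which $n^{\varepsilon}$ still overwhelms everything; you should just phrase it that way rather than claim the factor exceeds $1/2$. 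It is worth noting that your uniform treatment via $1-x\ge e^{-2x}$ is tidier than the paper's, which splits into $\varepsilon<1$ (where it needs $c\,n^{k-2}\le n^{k-1-\varepsilon}$ in order to invoke $(1-1/x)^x\ge 1/4$) and $\varepsilon=1$, and then further into $c\ge 1$ and $c<1$; in the $\varepsilon=1$, $c\ge1$ subcase the paper's appeal to the median of $\Bin(n^{k-2},n^{-(k-2)})$ equalling its mean only yields $\Prb(\cdot\ge 1)\ge 1/2$, not the claimed $\Prb(\cdot\ge 2)\ge 1/2$, so your route also sidesteps a genuine gap in the paper's own treatment of that subcase.
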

\begin{proof}
Assume first that $\varepsilon<1$. 
Since $\varepsilon\in(0,1)$, it follows that $c\cdot n^{k-2}\le n^{k-1-\varepsilon}$, for large enough $n$, and therefore for such $n$ it holds  
\begin{eqnarray*}
\Prb(X\ge 2) \ge \Prb(X=2)  &=& \binom{c\cdot n^{k-2}}{2} \cdot \left(\frac{1}{n^{k-1-\varepsilon}} \right)^2\cdot \left(1-\frac{1}{n^{k-1-\varepsilon}}\right)^{c\cdot n^{k-2}-2} \\
&\ge& \left(\frac{c\cdot n^{k-2}}{2}\right)^2\cdot \left(\frac{1}{n^{k-1-\varepsilon}} \right)^2\cdot \left(1-\frac{1}{n^{k-1-\varepsilon}}\right)^{n^{k-1-\varepsilon}} \\
&\ge& \left(\frac{c\cdot n^{k-2}}{2 n^{k-1-\varepsilon}}\right)^2 \cdot \frac{1}{4} \\
&=& \left(\frac{c}{4}\right)^2 \cdot \frac{1}{n^{2-2\varepsilon}} 
\end{eqnarray*}
where in the second estimate we have used the inequality $\left(1-\frac{1}{x}\right)^x \ge \frac{1}{4}$, for $x\ge 2$.
Now, for $n$ large enough, it holds  $\left(\frac{c}{4}\right)^2 \cdot \frac{1}{n^{2-2\varepsilon}} \ge \frac{1}{n^{2-\varepsilon}}$, and the result follows. 

Suppose now that $\varepsilon=1$. 
If $c\ge 1$, then $\Prb(X\ge 2) \ge \Prb(\Bin(n^{k-2}, \frac{1}{n^{k-2}}) \ge 2)\ge 1/2$, since the median of a  $\Bin(n^{k-2}, \frac{1}{n^{k-2}})$ random variable is equal to its mean (see~\cite{Kaas}).  
If $c<1$ then, in the same way as above, we have 
\begin{eqnarray*}
\Prb(X\ge 2) &\ge& \left(\frac{c\cdot n^{k-2}}{2}\right)^2\cdot \left(\frac{1}{n^{k-2}} \right)^2\cdot \left(1-\frac{1}{n^{k-2}}\right)^{n^{k-2}} \\
&\ge& \left(\frac{c}{4}\right)^2  \, ,
\end{eqnarray*}
and the latter is larger than $\frac{1}{n}$, for large enough $n$.   
\end{proof}

We now proceed with the proof of Theorem~\ref{thm:simple0}. 
Recall, from Section~\ref{sec:mixture}, the definition of the shadow of a hypergraph $\mH$ on $[n]$, which is denoted $\mathscr{S}(\mH)$, and recall that, given $A\subset [n]$ and a (multi)hypergraph $\mH$ on $[n]$, we denote by $m(A;\mH)$  the multiplicity of $A$ in $\mH$.   

Recall also that in order to generate a multigraph $G_n\in\mG(n,p\, ; \,\mH)$ we randomly select 
$\mD\in\mathscr{S}(\mH)$ and then we flip a coin for each doubleton $D\in \mD$  to decide whether we introduce an edge between the two vertices of $D$. We refer to this coin-flipping  procedure as the \emph{coin-flips corresponding to $\mD$}.

Given $\mD\in\mathscr{S}(\binom{[n]}{k})$, for $k\ge 3$,  let 
\[
\mF_{\mD} = \left\{B\in \binom{[n]}{2} : m(B;\mD)\ge 2\right\}
\]
and observe that, since 
$|\mD|= \binom{n}{k} = O(n^k)$ and $k\ge 3$, it holds 
$\mF_{\mD}\neq\emptyset$, for $n$ large enough. 
Furthermore, for $n$ large enough, it holds 
$|\mF_{\mD}|= \Omega(n^2)$ for all $\mD\in\mathscr{S}(\binom{[n]}{k})$. Indeed, 
if this is not the case then there exists $\mD\in\mathscr{S}(\binom{[n]}{k})$ such that 
$|\mF_{\mD}|= O(n^{2-\varepsilon})$, for some $\varepsilon>0$, and  therefore the fact that 
$m(B;\mD)\le \binom{n-2}{k-2}$, for all $B\in\mF_{\mD}$,  yields 
\[
|\mD| \le O(n^{2-\varepsilon}) \cdot \binom{n-2}{k-2} + n^2  \ll O(n^k) \, ,
\]
contrariwise to the fact that 
$|\mD| = \binom{n}{k} = \Omega(n^k)$. 
Similarly, letting 
\[
\mF_{\mD}^{(k)} = \left\{B\in \binom{[n]}{2} : m(B;\mD) = \Omega(n^{k-2})\right\} \, ,
\]
it holds  $|\mF_{\mD}^{(k)}|=\Omega(n^2)$.

\begin{proof}[Proof of Theorem~\ref{thm:simple0}]
To simplify notation, set $\mH_n = \binom{[n]}{k}$, for $n\ge 4$, and let $G_n\in\mG(n,p_n\, ; \,\mH_n)$. 
\begin{enumerate}
\item The result follows from Lemma~\ref{all_edges} upon observing that 
$\Prb(G_n \text{ is empty}) \le\Prb(G_n \text{ is simple})$.  
\item Assume now that $p_n\gg \frac{1}{n^{k-1}}$. 
For each $\mD\in\mathscr{S}(\mH_n)$, and  each $B\in \binom{[n]}{2}$,  
let $I_B(\mD)$ denote the indicator of the event ``the coin-flips corresponding to $\mD$ resulted in at least two edges between the vertices of $B$". 

For each $\mD\in\mathscr{S}(\mH_n)$, define the random variable $Y_{\mD} = \sum_{B\in \mF_{\mD}} I_B(\mD)$. Then $Y_{\mD}$ counts the number of pairs of vertices from $[n]$ for which the 
coin-flips corresponding to $\mD$ resulted in at least two edges between them. Hence 
\[
\Prb(G_n\, \text{is simple}) = \binom{k}{2}^{-|\mH_n|} \sum_{\mD\in\mathscr{S}(\mH_n) }\Prb(Y_{\mD} =0) \, . 
\]
We aim to show that $\Exp(Y_{\mD})\to \infty$, for all $\mD\in\mathscr{S}(\mH_n)$. 
For every  $\mD\in\mathscr{S}(\mH_n)$  and  each $B\in\mF_{\mD}$ it holds 
\[
\Exp(I_B(\mD))=\Prb(I_B(\mD) = 1) = \Prb(\Bin(m(B;\mD), p_n)\ge 2) \, ,
\]
thus
\begin{equation}\label{mean_Y}
\Exp(Y_{\mD}) = \sum_{B\in \mF_{\mD}} \Prb(\Bin(m(B;\mD), p_n)\ge 2) \ge \sum_{B\in \mF_{\mD}^{(k)}} \Prb(\Bin(m(B;\mD), p_n)\ge 2) \, .  
\end{equation}

We  distinguish two cases. Assume first that $p_n\ge \frac{1}{n^{k-2}}$. 
Then the case $\varepsilon =1$ in Lemma~\ref{bin_ineq} implies that, for large enough $n$ and $B\in \mF_{\mD}^{(k)}$, it holds 
\[
\Prb(\Bin(m(B;\mD), p_n)\ge 2) \ge \Prb\left(\Bin\left(m(B;\mD),  \frac{1}{n^{k-2}}\right)\ge 2\right) \ge \frac{1}{n}\, .
\]
Hence~\eqref{mean_Y} yields 
\[
\Exp(Y_{\mD}) \ge \sum_{B\in \mF_{\mD}^{(k)}} \frac{1}{n} = \Omega(n) \to \infty \, .
\]

Suppose now that $p_n\gg \frac{1}{n^{k-1}}$ but $p_n< \frac{1}{n^{k-2}}$. Then there exists $\varepsilon\in (0,1)$ such that $p_n \ge \frac{1}{n^{k-1-\varepsilon}}$, for $n$ large enough.  
For such $n$ and $B\in \mF_{\mD}^{(k)}$, Lemma~\ref{bin_ineq} implies that 
\[
\Prb(\Bin(m(B;\mD), p_n)\ge 2) \ge \Prb\left(\Bin\left(m(B;\mD), \frac{1}{n^{k-1-\varepsilon}}\right)\ge 2\right) \ge \frac{1}{n^{2-\varepsilon}} \, ,
\]
and~\eqref{mean_Y} now yields 
\[
\Exp(Y_{\mD}) \ge \sum_{B\in \mF_{\mD}^{(k)}} \frac{1}{n^{2-\varepsilon}} = \Omega(n^{\varepsilon}) \to \infty \, .
\]
Summarising the above, we have shown that $\Exp(Y_{\mD})$ tends to infinity when $p_n\gg\frac{1}{n^{k-1}}$. We now look at the variance of $Y_{\mD}$. 
Observe that, for each $\mD\in\mathscr{S}(\mH_n)$, the indicators 
$\{I_B(\mD)\}_{B\in\mF_{\mD}}$ are mutually independent; thus    
\begin{eqnarray*}
\text{Var}(Y_{\mD}) &=& \sum_{B\in\mF_{\mD}} \text{Var}(I_B(\mD)) \\
&=& \sum_{B\in\mF_{\mD}} \left(\Exp(I_B(\mD)) - (\Exp(I_B(\mD)))^2 \right)\\
&\le&  \sum_{B\in\mF_{\mD}} \Exp(I_B(\mD))\\ 
&=& \Exp(Y_{\mD}).
\end{eqnarray*}
The second moment method now yields  
\[
\Prb(Y_{\mD} = 0) \le \frac{\text{Var}(Y_{\mD})}{(\Exp(Y_{\mD}))^2} \le \frac{1}{\Exp(Y_{\mD})}  \to 0 , \, \text{ as } n\to \infty\,. 
\]
Summarizing the above, we have shown that 
$\Prb(\mG(n,p_n\, ; \,\mD) \, \text{is simple})\to 0$, for all $\mD\in\mathscr{S}(\mH_n)$, and so  
$\Prb(G_n\,\text{is simple})\to 0$. The result follows. 
\end{enumerate}
\end{proof}

\subsection{Proof of Theorem~\ref{thm:connected}}

The proof is analogous to the proof of the connectivity threshold in $\mG(n,p)$. 
We begin with determining a threshold for not having isolated vertices. 

\begin{theorem}\label{thm:isolated}
The property ``Not having an isolated vertex" has threshold  $\frac{\log(n)}{n^2 }$ in $\mG(n,p\, ; \,\binom{[n]}{3})$.
\end{theorem}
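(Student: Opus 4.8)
The plan is to mimic the classical argument for the isolated-vertex threshold in $\mG(n,p)$, using the first and second moment methods applied to the number of isolated vertices, while handling the extra layer of randomness coming from the hypergraph via the mixture representation of Theorem~\ref{mixture}. Let $G_n\in\mG(n,p_n\, ; \,\binom{[n]}{3})$ and let $X=\sum_{v\in[n]} \mathbf{1}_{\{\deg_{G_n}(v)=0\}}$ count the isolated vertices. By Corollary~\ref{cor:1}, for a fixed vertex $v$ we have $\deg_{G_n}(v)\sim\Bin\bigl(\binom{n-1}{2},\tfrac{2p_n}{3}\bigr)$, since $\binom{[n]}{3}$ is $3$-uniform and $v$ lies in $\binom{n-1}{2}$ triples. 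Hence
\[
\Exp(X)=n\cdot\left(1-\frac{2p_n}{3}\right)^{\binom{n-1}{2}}.
\]
Writing $p_n = c_n\cdot\frac{\log n}{n^2}$ and using $1-x\le e^{-x}$ and $1-x\ge e^{-x-x^2}$ for small $x$, one gets $\Exp(X)=n^{1-(1/3+o(1))c_n(1+o(1))}$ up to constants, so $\Exp(X)\to 0$ when $p_n\gg\frac{\log n}{n^2}$ (more precisely when $c_n\to\infty$), and $\Exp(X)\to\infty$ when $p_n\ll\frac{\log n}{n^2}$. The $1$-statement then follows from Markov's inequality: $\Prb(G_n\text{ has an isolated vertex})=\Prb(X\ge 1)\le\Exp(X)\to 0$.

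For the $0$-statement I would like to apply the second moment method to $X$, showing $\Prb(X=0)\le \operatorname{Var}(X)/(\Exp X)^2\to 0$. The obstacle here, and the reason the argument is not a verbatim copy of the $\mG(n,p)$ proof, is precisely the dependency phenomenon emphasised in the introduction: in $\mG(n,p\, ; \,\binom{[n]}{3})$ the events $\{\deg(u)=0\}$ and $\{\deg(v)=0\}$ are not independent, because a triple $H=\{u,v,w\}$ contributes a single doubleton which may or may not touch $u$ or $v$, and the choices for the triples containing both $u$ and $v$ (there are $n-2$ of them, each being $\{u,v,w\}$ for some $w$) couple the two degrees. So I would first condition on the shadow: by Theorem~\ref{mixture},
\[
\Prb(G_n\text{ has an isolated vertex})=\binom{3}{2}^{-\binom{n}{3}}\sum_{\mD\in\mathscr{S}(\binom{[n]}{3})}\Prb\bigl(\mG(n,p_n\, ; \,\mD)\text{ has an isolated vertex}\bigr),
\]
and it suffices to show the summand probability tends to $0$ for \emph{every} $\mD\in\mathscr{S}(\binom{[n]}{3})$ that is "typical"; more robustly, one shows it for all $\mD$ with the property that every vertex has large $\mD$-degree. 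Once $\mD$ is fixed, $\mG(n,p_n\, ; \,\mD)$ is just a Poisson-binomial-type random graph on the doubleton-multiset $\mD$, the coin-flips are independent, and the events $\{\deg_{\mD}(u)=0\}$, $\{\deg_{\mD}(v)=0\}$ are now only mildly correlated through the few doubletons equal to $\{u,v\}$; the standard computation $\Exp(X(X-1))=\sum_{u\ne v}\Prb(\deg(u)=\deg(v)=0)$ goes through because each such probability is at most $(1+o(1))$ times the product $\Prb(\deg(u)=0)\Prb(\deg(v)=0)$, the correction being at most a factor $(1-p_n)^{-(n-2)}=1+o(1)$ from the at most $n-2$ shared doubletons (at most one per $w$). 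This yields $\Exp(X(X-1))\le(1+o(1))(\Exp X)^2$, hence $\operatorname{Var}(X)=o((\Exp X)^2)$ and $\Prb(X=0)\to 0$.

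The remaining point is to make the "for every $\mD$" step clean: I would observe, exactly as in the paragraph preceding the proof of Theorem~\ref{thm:simple0} and in Lemma~\ref{lim_one}, that for $\mH_n=\binom{[n]}{3}$ one has $\deg_{\mH_n}(v)=\binom{n-1}{2}$ for every vertex, and a uniformly random $\mD\in\mathscr{S}(\binom{[n]}{3})$ has $\deg_{\mD}(v)=\Theta(n^2)$ for all $v$ with probability tending to $1$ (each triple containing $v$ independently contributes a $v$-incident doubleton with probability $2/3$, so $\deg_{\mD}(v)$ stochastically dominates a $\Bin(\binom{n-1}{2},2/3)$ variable, and a union bound over $n$ vertices plus a Chernoff bound kills the exceptional event). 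Conditioning on this high-probability event on $\mD$, the fixed-$\mD$ second moment computation above applies uniformly, and averaging back via Theorem~\ref{mixture} gives $\Prb(G_n\text{ has an isolated vertex})\to 0$ when $p_n\ll\frac{\log n}{n^2}$. I expect the bookkeeping in the fixed-$\mD$ second-moment estimate — controlling the joint probability $\Prb(\deg_{\mD}(u)=\deg_{\mD}(v)=0)$ against the product of marginals with an explicit $1+o(1)$ factor — to be the main technical step, though it is routine once the shared-doubleton count is pinned down; the conceptual content is simply that the hypergraph randomness can be quarantined by conditioning on the shadow.
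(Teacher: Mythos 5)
Your first-moment step matches the paper exactly. For the $0$-statement, however, you propose conditioning on the shadow $\mD\in\mathscr{S}(\binom{[n]}{3})$ via Theorem~\ref{mixture} and running a second-moment computation for each ``typical'' $\mD$, with a Chernoff-plus-union-bound concentration argument to control the degrees $\deg_\mD(v)$; this can be made to work, but the paper avoids the conditioning altogether by computing the joint probability $\Prb(A_i\cap A_j)$ directly. The key observation is that the two layers of randomness (doubleton choice and coin flip) are already mutually independent \emph{across triples} of $\binom{[n]}{3}$, so $\Prb(A_i\cap A_j)$ factors as a product over triples: each of the $2\binom{n-2}{2}$ triples containing exactly one of $i,j$ contributes $1-\tfrac{2p}{3}$, and each of the $n-2$ triples containing both contributes $1-p$ (since every doubleton in such a triple touches $i$ or $j$), giving the exact identity $\Prb(A_i\cap A_j)=(1-\tfrac{2p}{3})^{2\binom{n-2}{2}}(1-p)^{n-2}$. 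Bounding $(1-p)^{n-2}\le(1-\tfrac{2p}{3})^{n-2}$ and dividing by $\mu_n^2$ then gives $\gamma_n/\mu_n^2\le(1-p_n)^{-n}\to 1$, and the second-moment method closes without ever examining $\mD$. Your route is correct but heavier: you separately need $\Exp(X_\mD)\to\infty$ uniformly over typical $\mD$, a step the paper's unconditional computation gets for free. Two small inaccuracies in your sketch worth noting: $\deg_\mD(v)$ is not merely stochastically dominated by but \emph{equal in distribution to} $\Bin(\binom{n-1}{2},2/3)$; and what you actually need from the typicality event is an \emph{upper} bound on the degrees (so that $(1-p)^{\deg_\mD(v)}$ is not too small and $\Exp(X_\mD)$ still diverges), so ``every vertex has large $\mD$-degree'' is not quite the right phrasing of the conditioning event.
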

\begin{proof}
For each $n\ge1$, let $G_n \in \mG(n,p_n\, ; \, \binom{[n]}{3})$. 
For $i\in [n]$, let $I_i$ denote the indicator of the event ``vertex $i$ is isolated". 
Then $X_n=\sum_{i\in [n]}I_i$ is the number of isolated vertices in $G_n$. 
From Corollary~\ref{cor:1} it follows that 
\[
\mu_n := \Exp(X_n) = n\cdot \left(1 - \frac{2p_n}{3} \right)^{\binom{n-1}{2}} \, .
\]
Therefore, using the inequality $1-x\le e^{-x}$, for $x\in \mathbb{R}$, we conclude that  
\begin{eqnarray*}
\Exp(X_n) &\le&  e^{\log(n) - \frac{2p_n}{3} \cdot \binom{n-1}{2}} \to 0,\,  \, \text{ when } \, p_n \gg \frac{\log(n)}{n^2} \, .
\end{eqnarray*}
Markov's inequality now gives 
$\Prb(X_n \ge 1) \le \Exp(X_n) \to 0$, 
and therefore $\Prb(X_n = 0) \to 1$. 

Now assume that $p_n \ll \frac{\log(n)}{n^2 }$. 
For $i\in [n]$, let $A_i$ be the event  that ``vertex $i$ is isolated". Then, using the inequality 
$1-x \ge e^{-2x}$, for $x\in [0,1/2]$, we conclude, for large enough values of $n$, that  
\[
\mu_n = \sum_{i\in [n]} \Prb(A_i) = n\cdot \left(1 - \frac{2p_n}{3} \right)^{\binom{n-1}{2}} \ge 
e^{\log(n) - \frac{4p_n}{3} \cdot \binom{n-1}{2}} \to +\infty \, .
\]
Now, for any two distinct vertices 
$i,j\in [n]$, let $\mH_i = \{H\in \binom{[n]}{3} : i\in H \, , \, j \notin H\}$, 
$\mH_j= \{H\in \binom{[n]}{3} : j\in H \, , \, i \notin H\}$ and $\mH_{ij} =  \{H\in \binom{[n]}{3} : \{i,j\}\subset H\}$. Then it holds $|\mH_i| = |\mH_j| = \binom{n-2}{2}$ and $|\mH_{ij}| = n-2$, and so 
\[
\Prb(A_i \cap A_j) = \left(1-\frac{2p_n}{3}\right)^{2\binom{n-2}{2}} \cdot (1-p_n)^{n-2} \le  \left(1-\frac{2p_n}{3}\right)^{2\binom{n-2}{2} + (n-2)} . 
\]
We conclude that 
\[
\gamma_n := \sum_{i\neq j} \Prb(A_i\cap A_j) \le n(n-1) \left(1-\frac{2p_n}{3}\right)^{2\binom{n-2}{2} + (n-2)}   \, ,
\]
which yields   
\begin{eqnarray*}
\frac{\Exp(X_n^2)}{(\Exp(X_n))^2} &=& \frac{\mu_n + \gamma_n}{\mu_n^2} \\
&=& \frac{1}{\mu_n} + \frac{\gamma_n}{\mu_n^2} \\
&\le& \frac{1}{\mu_n} +   \frac{     n(n-1) \left(1-\frac{2p_n}{3}\right)^{2\binom{n-2}{2} + (n-2)}    }{n^2\cdot \left(1 - \frac{2p_n}{3} \right)^{2\binom{n-1}{2}}} \\ 
&\le&  \frac{1}{\mu_n} + \frac{1}{\left(1-\frac{2p_n}{3}\right)^{ n-2 }  } \\
&\le&  \frac{1}{\mu_n} + \frac{1}{\left(1-p_n\right)^{ n }  }   \, .
\end{eqnarray*}
Now notice that the assumption $p_n \ll \frac{\log(n)}{ n^2 }$  implies 
\[
\left(1-p_n\right)^{ n } \ge e^{-2np_n} \to 1 \, .
\]
Since $\mu_n\to +\infty$, the second moment method  yields 
\[
\Prb(X_n \ge 1) \ge \frac{(\Exp(X_n))^2}{\Exp(X_n^2)}  \to 1 \, . 
\]
This implies that $\Prb(X_n=0) \to 0$, and the result follows. 
\end{proof}

We may now proceed with the proof of Theorem~\ref{thm:connected}.

\begin{proof}[Proof of Theorem~\ref{thm:connected}]
Suppose first that $p_n \ll \frac{\log(n)}{n^2}$. Then, Theorem~\ref{thm:isolated} implies that the probability that $G_n\in\mG(n,p_n\, ; \,\binom{[n]}{3})$ has isolated vertices tends to one, as $n$ tends to infinity; hence the same holds true for the probability that $G_n\in\mG(n,p_n\, ; \,\binom{[n]}{3})$ is disconnected. 

Suppose now that $p_n\gg \frac{\log(n)}{n^2}$. Let  
$P_n$ be the probability of the event that ``a graph $G_n\in \mG(n,p_n\, ; \,\binom{[n]}{3})$ is disconnected". We aim to show that $P_n$ tends to zero.  
For each $m\le n/2$, let $X_m$ be the number of subsets of $m$ vertices that are disconnected from all remaining vertices of the graph $G_n$. Then
\[
P_n \le \Prb\left(\sum_{m=1}^{n/2} X_m > 0\right) \le  \sum_{m=1}^{n/2} \Exp(X_m) \, , 
\]
and it is therefore enough to bound $\Exp(X_m)$. Now observe that 
\[
\Exp(X_m) = \binom{n}{m} \cdot \left(1-\frac{2p_n}{3}\right)^{(n-m)\cdot \binom{m}{2}} \cdot \left(1-\frac{2p_n}{3}\right)^{m\cdot \binom{n-m}{2}} \, ,
\]
since, for each subset of $[n]$ consisting of $m$ vertices, say $M$, there are $(n-m)\binom{m}{2}$ sets $H\in\binom{[n]}{3}$ having exactly two vertices in $M$, and $m\cdot \binom{n-m}{2}$ sets $H\in\binom{[n]}{3}$ having exactly two vertices in $[n]\setminus M$, and for each of the aforementioned  $H\in\binom{[n]}{3}$ the probability that no edge is introduced  between a vertex in $M$ and a vertex in $[n]\setminus M$ is equal to $1-\frac{2p_n}{3}$. Hence, using the  estimates $m\le n/2$ and 
$m! \ge \left(\frac{m}{e}\right)^m$, 
we conclude that 
\begin{eqnarray*}
\Exp(X_m) &=&  \binom{n}{m} \cdot \left(1-\frac{2p_n}{3}\right)^{(n-m)\cdot \binom{m}{2} +m\cdot \binom{n-m}{2} } 
= \binom{n}{m} \cdot  \left(1-\frac{2p_n}{3}\right)^{ \frac{m(n-m)(n-2)}{2}} \\ 
&\le& \frac{n^m}{m!} \cdot \left(1-\frac{2p_n}{3}\right)^{ \frac{m n (n-2)}{4}} \le e^m \cdot n^m\cdot 
\left(1-\frac{2p_n}{3}\right)^{ \frac{m n (n-2)}{4}} \\
&=&\left( e\cdot n \cdot \left(1-\frac{2p_n}{3}\right)^{\frac{n(n-2)}{4}}     \right)^m \, .
\end{eqnarray*}
We therefore have 
\[
\sum_{m=1}^{n/2} \Exp(X_m) \le \sum_{m\ge 1} \left( e\cdot n \cdot \left(1-\frac{2p_n}{3}\right)^{\frac{n(n-2)}{4}}     \right)^m \, .
\]
Now observe that, for $p_n\gg \frac{\log(n)}{n^2}$, it holds  
\[
A_n:=e\cdot n \cdot \left(1-\frac{2p_n}{3}\right)^{\frac{n(n-2)}{4}} \le e\cdot n \cdot e^{-\frac{2p_n}{3} \cdot \frac{n(n-2)}{4}} = e\cdot e^{\log(n)  -\frac{2p_n}{3} \cdot \frac{n(n-2)}{4}} \to 0 \, ,
\]
and hence, for $n$ large enough,  we have  
\[
\sum_{m=1}^{n/2} \Exp(X_m) \le \frac{A_n}{1-A_n} \to 0 \, .
\]
The result follows. 
\end{proof}

\section{Triangles}\label{sec:triangles}

\subsection{Proof of Theorem~\ref{thm:triangle_rand}}

Let $G_n\in\mG(n,p\, ; \,\mbH_{n,q;3} )$.  Let also $X$ count the number of triangles in $G_n$, and for each potential triangle $T\in\binom{[n]}{3}$, let $I_T$ be the indicator of the event ``the triangle $T$ is present in $G_n$". 
Then 
$\Exp(X) = \sum_{T\in\binom{[n]}{3}}\Prb(I_T =1)$, and the linearity of expectation implies that it is enough 
to compute $\Prb(I_T =1)$ for a particular $T\in\binom{[n]}{3}$.
 
Let $T\in\binom{[n]}{3}$ be fixed. 
Recall that for every $H\in\mbH_{n,q;3}$ we choose a doubleton $D\in\binom{H}{3}$ uniformly at random, and then flip a coin to decide whether we join the two vertices vertices in $D$ with an edge. 
In particular, we may flip such a coin for $H=T$; this happens with probability $q$. 
So assume first that $T\in\mbH_{n,q;3}$ and let $\binom{T}{2}= \{D_1,D_2,D_3\}$. 
For every $i=1,2,3$, let $X_i$ be the number of hyperedges $H\in\mbH_{n,q;3}\setminus \{T\}$ such that $D_i\subset H$, and observe that $X_i\sim\Bin(n-3,q)$, because there are $n-3$ elements in $\binom{[n]}{3}$ that contain $D$ and each such element is included in $\mbH_{n,q;3}$ with probability $q$. 
Note that the random variables 
$X_{i}, i=1,2,3,$ are  independent and identically distributed. Let $D$ be the doubleton which is chosen from $T$ uniformly at random. 
Then, given that $T\in \mbH_{n,q;3}$, we have 
\begin{equation}\label{ddd}
\Prb(I_T =1) = \sum_{i=1}^{3} \Prb(I_T =1 \, |\, D=D_i) \cdot \Prb(D=D_i) = \frac{1}{3}\sum_{i=1}^{3} \Prb(I_T =1 \, |\, D=D_i) \, .
\end{equation}
Now observe that it holds 
\[
\Exp(I_T  \, |\, D=D_1, X_1,X_2,X_3) = (1-p)\cdot \prod_{i=1}^{3} \Prb(\Bin(X_{i},\frac{p}{3})\ge 1)  + p \cdot \prod_{j\neq 1} \Prb(\Bin(X_{j},\frac{p}{3})\ge 1) \, ,
\]
because, given $X_1,X_2,X_3$, if the outcome of the coin-flip corresponding to $D_1$ is a failure, then we introduce no edge between the two vertices of $D_1$ and therefore the probability that the two vertices of every  $D\in\binom{T}{3}$ are joined by at least one edge is equal $\prod_{i=1}^{3} \Prb(\Bin(X_{i},\frac{p}{3})\ge 1)$. If the 
outcome of the coin-flip corresponding to $D_1$ is a success, then one edge is introduced between the two vertices of $D_1$ and the probability that the two vertices of every $D\in \binom{T}{3}\setminus \{D_1\}$ are joined by at least one edge is then equal to $\Prb(\Bin(X_{2},\frac{p}{3})\ge 1)\cdot \Prb(\Bin(X_{3},\frac{p}{3})\ge 1)$. 
Now note that, since $X_i\sim\Bin(n-3,q)$, for every $i=1,2,3$, it holds 
$\Bin(X_{i},\frac{p}{3}) \sim \Bin(n-3,\frac{pq}{3})$ and therefore it holds 
$\Exp\left( \Prb(\Bin(X_{i},\frac{p}{3})\ge 1)\right) = \Prb(\Bin(n-3,\frac{pq}{3})\ge 1)$; this together with the independence of the random variables $X_1,X_2,X_3$ imply that 
\begin{eqnarray*}
\Prb(I_T =1 \, |\, D=D_1)&=& \Exp\left(\Exp(I_T  \, |\, D=D_1, X_1,X_2,X_3)\right) \\
&=& (1-p)\cdot \left(\Prb(\Bin(n-3,\frac{pq}{3})\ge 1)\right)^3 + p\cdot \left(\Prb(\Bin(n-3,\frac{pq}{3})\ge 1)\right)^2 \, .
\end{eqnarray*}
By symmetry, it holds 
\[
\Prb(I_T =1 \, |\, D=D_1)=\Prb(I_T =1 \, |\, D=D_2)=\Prb(I_T =1 \, |\, D=D_3) \, ,
\]
and we conclude from~\eqref{ddd} that 
\begin{equation}\label{ddq}
\Prb(I_T =1\,|\, T\in\mbH_{n,q;3}) = (1-p)\cdot \left(\Prb(\Bin(n-3,\frac{pq}{3})\ge 1)\right)^3 + p\cdot \left(\Prb(\Bin(n-3,\frac{pq}{3})\ge 1)\right)^2 \, .
\end{equation}
Suppose now that $T\notin\mbH_{n,q;3}$. Then, using an analogous argument as before, we have 
\begin{equation}\label{dd1-q}
\Prb(I_T =1\,|\, T\notin\mbH_{n,q;3}) =  \left(\Prb(\Bin(n-3,\frac{pq}{3})\ge 1)\right)^3 \, .
\end{equation}
Hence~\eqref{ddq} and~\eqref{dd1-q} imply that  
\begin{eqnarray*}
\Prb(I_T =1) &=& q\cdot \Prb(I_T =1\,|\, T\in\mbH_{n,q;3})  + (1-q) \cdot \Prb(I_T =1\,|\, T\notin\mbH_{n,q;3}) \\
&=& (1-pq)\cdot \left(\Prb(\Bin(n-3,\frac{pq}{3})\ge 1)\right)^3 + pq\cdot \left(\Prb(\Bin(n-3,\frac{pq}{3})\ge 1)\right)^2 \, ,
\end{eqnarray*}
and Theorem~\ref{thm:triangle_rand} follows.

\subsection{Proof of Theorem~\ref{thm:triangle_rand2}}

The proof is similar to the proof of Theorem~\ref{thm:triangle_rand}, so we sketch it. 

Let $G_n\in\mG(n,p\,;\, \mbH_{n,m;3})$, let $T\in\binom{[n]}{3}$ be fixed, and let $I_T$ be the  indicator of the event ``the triangle $T$ is present in $G_n$".
We consider two cases. 
Either $T\in \mbH_{n,m;3}$ or $T\notin \mbH_{n,m;3}$. 

Suppose first that $T\in \mbH_{n,m;3}$; this happens with probability $\frac{m}{\binom{n}{3}}$. 
Suppose that $\binom{T}{2} = \{D_1,D_2,D_3\}$ and,
for $i=1,2,3$, let $W_i$ be the number of hyperedges in $\mbH_{n,m;3}\setminus \{T\}$  that contain $D_i$. 
Observe that $W_i \sim \Hyp(\binom{n-3}{3}, n-3, m)$, for each $i=1,2,3$, and that the 
random variables $W_1,W_2,W_3$ are  independent and identically distributed. 

Upon conditioning on the outcome of the coin-flip corresponding to $T$ and on $W_1,W_2,W_3$, we deduce that  
\[
\Prb(I_T=1\, |\, T\in \mbH_{n,m;3}) = (1-p) \cdot \left( \Prb(\Bin(W,p)\ge 1) \right)^3 + p\cdot \left( \Prb(\Bin(W,p)\ge 1) \right)^2 \, , 
\]
where $W\sim \Hyp(\binom{n-3}{3}, n-3, m)$. 

Similarly, we find 
\[
\Prb(I_T=1\, |\, T\notin \mbH_{n,m;3}) =  \left( \Prb(\Bin(W,p)\ge 1) \right)^3 \, , 
\]
where $W\sim\Hyp(\binom{n-3}{3}, n-3, m)$, 
and Theorem~\ref{thm:triangle_rand2} follows.

\subsection{Proof of Theorem~\ref{thm:triangle_four}}

Let $\mH_n = \binom{[n]}{4}$, and let $G_n\in\mG(n,p\, ; \,\mH_n)$. 
We begin with finding the probability, say $\alpha$, that all pairs of vertices of a particular triplet $T\in\binom{[n]}{3}$ are joined by at least one edge. 
Let $\mH_{T} = \{H\in\mH_n : T\subset H\}$, and note that $|\mH_T|=n-3$. 
Also note that any element $H\in\mH_n\setminus\mH_T$ satisfies $|H\cap T|\le 2$. 
We  compute the desired probability via conditioning on the number of pairs of vertices of $T$ which have been joined by at least one edge after ``flipping the 
coins corresponding to $\mH_T$".

We  begin with choosing a doubleton uniformly at random from every element of $\mH_T$, and then flip a coin for every such doubleton in order to decide whether we put an edge between the corresponding edges. 
Let $Y$ count the number of doubletons $D\in \binom{T}{2}$ whose elements have been joined with at least one edge in the graph. 
Observe that $Y\in\{0,1,2,3\}$, and let $\pi_i = \Prb(Y=i)$, for $i=0,1,2,3$. 

Now, given that $Y=0$, the probability that $T$ is a triangle in $G_n$ equals  
\[
\alpha_0 = \left\{\Prb\left(\Bin\left(\binom{n-3}{2}, \frac{p}{6} \right)\ge 1 \right)\right\}^3 \, , 
\]
since for every $D\in\binom{T}{2}$ there are $\binom{n-3}{2}$ elements $H\in\mH\setminus\mH_T$ 
that contain $D$, and the probability that the coin-flip corresponding to a particular $H\in\mH\setminus\mH_T$ which contains $D$ results in an edge between the elements of $D$ is equal to $\frac{p}{6}$. 
Similarly, given that $Y=i$, for $i=1,2$,  the probability that $T$ is a triangle in $G_n$ equals
\[
\alpha_i =  \left\{\Prb\left(\Bin\left(\binom{n-3}{2}, \frac{p}{6} \right)\ge 1 \right)\right\}^{3-i} \, .
\]
Hence the desired probability is equal to 
\[
\alpha = \sum_{i=0}^3 \alpha_i \cdot \Prb(Y=i) \, ,
\]
and it remains to determine the probability distribution of $Y$. 
Consider the absorbing homogeneous Markov chain on the state space $S=\{0,1,2,3\}$ having transition matrix given by 
\[
P = 
\begin{blockarray}{ccccc}
 & 0 & 1 & 2 & 3 \\
\begin{block}{c[cccc]}
  0 & 1-p/2 & p/2 & 0 & 0  \\
  1 & 0    & 1-p/3    & p/3 & 0  \\
  2 & 0 & 0 & 1-p/6 & p/6  \\
  3 & 0 & 0 & 0 & 1  \\ 
\end{block} 
\end{blockarray} 
 \]
We claim that for $i=0,1,2$, $\Prb(Y=i)$ is equal to the $(0,i)$-element of $P^{n-3}$, denoted $p_{0i}^{(n-3)}$. 
In other words, we claim that $\Prb(Y=i)$ is equal to the probability that, beginning from state $0$, a Markov chain with transition matrix $P$ will be in state $i$ after $n-3$ steps. 
Given the claim, we then also obtain $\Prb(Y=3) = 1 - \sum_{i=0}^{2}p_{0i}^{(n-3)}$.

To prove the claim, recall that a sample from $Y$ is obtained after ``flipping the 
coins corresponding to $\mH_T$". We now flip the coins corresponding to $\mH_T$ one by one in a series of steps. 
Let $H_1,\ldots, H_{n-3}$ be an enumeration of the elements of $\mH_T$. 
For each step $t=1,\ldots, n-3$, we choose  a doubleton $D_{t}\in \binom{H_i}{2}$ uniformly at random and then flip a coin in order to decide whether we join the elements of $D_i$ with an edge in the graph.  
For each step $t=1,\ldots, n-3$, let $Y^{(t)}$ be the number of elements in $\binom{T}{2}$ which have been 
joined with at least one edge at step $t$, and set $Y^{(0)} = 0$. 
Note that $Y^{(n-3)}=Y$ and that if $Y^{(t-1)}=j$, for some $j=0,1,2$, then $Y^{(t)}\in \{j,j+1\}$. 
Now observe that for any $t\ge 1$ and any $j=0,1,2$ we have
\[
\Prb(Y^{(t)} = j+1 \, |\, Y^{(t-1)}=j) = \frac{3-j}{6}\cdot p \, ,
\]
because, given that $Y^{(t-1)}=j$, exactly  $3-j$ out of the six doubletons in $\binom{H_{t}}{2}$ are favourable (in the sense that they may potentially  introduce an edge between two previously unjoined vertices of $T$) 
and, given that a favourable doubleton is selected, there is a probability $p$ that an edge will be introduced between the 
two corresponding vertices of $T$. 
In other words, $\{Y^{(t)}\}_{t\ge 0}$ is a homogeneous absorbing Markov chain, which begins at state $0$, having transition matrix given by $P$. Theorem~\ref{thm:triangle_four} follows upon recalling that $Y= Y^{(n-3)}$.

\end{document}